\documentclass[11pt,en,oneside,bbfont,article, reqno]{amsart} 
\usepackage[utf8]{inputenc}
\usepackage{datetime}
\usepackage[a4paper,margin=1in]{geometry}
\usepackage{amsfonts}
\usepackage{amsmath}
\usepackage{amssymb}
\usepackage{nicefrac}

\usepackage{amsthm}
\usepackage{csquotes}
\usepackage{tikz}
\usepackage{textcomp}
\usepackage{arydshln}
\usepackage{float}
\usepackage[dvipsnames]{xcolor}

\usepackage{breqn}
\usepackage{thmtools}
\usepackage{multirow}
\usepackage{mathtools}\mathtoolsset{showonlyrefs}
\usepackage{graphicx}
\usepackage{subcaption}

\usepackage{booktabs}
\usepackage{listings}

\usepackage{letltxmacro}

\LetLtxMacro\OriginalLongrightarrow\Longrightarrow
\LetLtxMacro\OriginalLongleftarrow\Longleftarrow

\usepackage{trimclip}
\DeclareRobustCommand\Longrightarrow{\NewRelbar\joinrel\Rightarrow}
\DeclareRobustCommand\Longleftarrow{\Leftarrow\joinrel\NewRelbar}

\makeatletter
\DeclareRobustCommand\NewRelbar{
	\mathrel{
		\mathpalette\@NewRelbar{}
	}
}
\newcommand*\@NewRelbar[2]{
	\sbox0{$#1=$}
	\sbox2{$#1\Rightarrow\m@th$}
	\sbox4{$#1\Leftarrow\m@th$}
	\clipbox{0pt 0pt \dimexpr(\wd2-.6\wd0) 0pt}{\copy2}
	\kern-.2\wd0
	\clipbox{\dimexpr(\wd4-.6\wd0) 0pt 0pt 0pt}{\copy4}
}
\makeatother

\definecolor{seagreen}{HTML}{21b2aa}
\definecolor{magenta}{HTML}{b2217f}
\definecolor{gold}{HTML}{ca9520}
\definecolor{red}{HTML}{da272f}
\definecolor{blue}{HTML}{4682b4}
\definecolor{navy}{HTML}{7b000b}
\definecolor{nred}{HTML}{650015}
\definecolor{ngreen}{HTML}{156515}
\definecolor{npurple}{HTML}{450075}
\definecolor{ngreenalt}{HTML}{278e40}

\definecolor{cream-white}{HTML}{f5f5f0}
\definecolor{off-black}{HTML}{1a1a1a}

\definecolor{highlightcolor}{named}{navy}

\definecolor{codebg}{HTML}{f0f0f0}
\definecolor{commentfg}{HTML}{333333}

\lstdefinestyle{mystyle}{
	backgroundcolor=\color{codebg},
	commentstyle=\color{commentfg},
	keywordstyle=\color{navy},
	stringstyle=\color{commentfg},
	basicstyle=\ttfamily\footnotesize,
	breakatwhitespace=false,
	breaklines=true,
	captionpos=b,
	keepspaces=true,
	numbers=none,
	showspaces=false,
	showstringspaces=false,
	showtabs=false,
	tabsize=2
}
\usepackage{enumitem}

\usepackage[
	style=numeric,
	giveninits,
	sortcites=true
]{biblatex}
\let\oldcite\cite
\renewcommand{\cite}[2][]{%
	\def\tmp{#1}%
	\ifx\tmp\empty
		\mbox{\oldcite{#2}}%
	\else
		\mbox{\oldcite[\tmp]{#2}}%
	\fi
}
\AtEveryBibitem{\clearfield{doi}}
\AtEveryBibitem{\clearfield{url}}
\AtEveryBibitem{\clearfield{isbn}}
\AtEveryBibitem{\clearfield{issn}}

\usepackage{pgfplots}
\pgfplotsset{compat=1.15}
\usepackage{mathrsfs}
\usetikzlibrary{arrows}

\usepackage[section]{placeins}

\newcommand{\ub}{\rightarrow \infty}

\newcommand{\st}{\, \colon \,}
\newcommand{\D}{\,\mathrm d}

\newcommand{\jpod}{\stackrel{\mathrm d}{=}}

\newcommand{\konvd}{\stackrel{\mathrm d}{\longrightarrow}}

\DeclareMathOperator{\ctg}{cot}

\DeclareMathOperator{\per}{perim}
\DeclareMathOperator{\area}{area}
\DeclareMathOperator{\hull}{hull}
\DeclareMathOperator*{\argmax}{argmax}

\newcommand{\ol}[1]{\overline{#1}}

\newcommand{\R}{\mathbb{R}}

\renewcommand{\P}{\mathbb{P}}

\newcommand{\E}{\mathbb{E}}

\DeclarePairedDelimiter{\vit}{\{}{\}}
\let\oldvit\vit
\renewcommand{\vit}[1]{\oldvit*{#1}}

\DeclarePairedDelimiter{\ug}{[}{]}
\let\oldug\ug
\renewcommand{\ug}[1]{\oldug*{#1}}

\DeclarePairedDelimiter{\ob}{(}{)}
\let\oldob\ob
\renewcommand{\ob}[1]{\oldob*{#1}}

\DeclarePairedDelimiterX{\bob}[1]{\Big(}{\Big)}{#1}
\DeclarePairedDelimiterX{\bug}[1]{\Big[}{\Big]}{#1}
\DeclarePairedDelimiterX{\bvit}[1]{\Big\{}{\Big\}}{#1}

\DeclarePairedDelimiter{\skp}{\langle}{\rangle}

\renewcommand{\skp}[2]{\oldskp*{#1,#2}}
\renewcommand{\skp}[2]{ {#1} \cdot {#2} }

\DeclarePairedDelimiter{\oo}{(}{)}
\let\oldoo\oo
\renewcommand{\oo}[1]{\oldoo*{#1}}

\DeclarePairedDelimiter{\oc}{(}{]}
\let\oldoc\oc
\renewcommand{\oc}[1]{\oldoc*{#1}}

\DeclarePairedDelimiter{\co}{[}{)}
\let\oldco\co
\renewcommand{\co}[1]{\oldco*{#1}}

\DeclarePairedDelimiter{\cc}{[}{]}
\let\oldcc\cc
\renewcommand{\cc}[1]{\oldcc*{#1}}

\newcommand{\up}[1]{\mathrm{#1}}

\renewcommand{\le}{\leqslant}
\renewcommand{\ge}{\geqslant}

\newcommand{\nderi}[1]{^{(#1)}}

\newcommand{\restr}[2]{{
\left.\kern-\nulldelimiterspace
#1
\vphantom{\big|}
\right|_{#2}
}}

\makeatletter
\DeclareRobustCommand{\lasymp}{\lg@asymp{<}}
\DeclareRobustCommand{\gasymp}{\lg@asymp{>}}

\newcommand{\under@asymp}[1]{\clipbox{0pt 0pt 0pt {0.5\height}}{$\m@th#1\asymp$}}
\newcommand{\lg@asymp}[1]{\mathrel{\mathpalette\lg@asymp@{#1}}}
\newcommand{\lg@asymp@}[2]{%
	\vcenter{%
		\offinterlineskip
		\m@th
		\ialign{%
			\hfil##\hfil\cr
			$#1#2$\cr
			\under@asymp{#1}\cr
		}%
	}%
}
\makeatother

\newcommand{\astfootnote}[1]{%
	\let\oldthefootnote=\thefootnote%
	\setcounter{footnote}{0}%
	\renewcommand{\thefootnote}{\fnsymbol{footnote}}%
	\footnote{#1}%
	\let\thefootnote=\oldthefootnote%
}

\usepackage[unicode]{hyperref}
\hypersetup{
	colorlinks,
	linkcolor=highlightcolor,
	citecolor=highlightcolor,
	urlcolor=highlightcolor,
	hypertexnames=false
}

\newtheorem{theorem}{Theorem}[section]
\newtheorem{proposition}[theorem]{Proposition}
\newtheorem{lemma}[theorem]{Lemma}
\theoremstyle{remark}
\newtheorem{remark}[theorem]{Remark}
\newtheorem{corollary}[theorem]{Corollary}

\newcommand{\iproc}[2]{X_{#1}(#2)}
\newcommand{\iprocs}[1]{X_{#1}}
\newcommand{\gproc}[3]{{#1}_{#2}(#3)}
\newcommand{\gprocs}[2]{{#1}_{#2}}
\newcommand{\si}{\sigma}
\renewcommand{\top}{\tau}

\title[On the convex hull of a Gaussian-endpoint Brownian bridge]{On the convex hull of a planar Brownian bridge with a random Gaussian endpoint}
\author{Nikola Sandri\'{c}$^*$, Stjepan \v{S}ebek$^{**}$ \lowercase{and} Luka \v{S}imek$^*$}

\address{$^*$Department of Mathematics, University of Zagreb, HR}
\email{nsandric@math.hr}
\email{lsimek@math.hr}

\address{$^{**}$University of Zagreb Faculty of Electrical Engineering and Computing, HR}
\email{stjepan.sebek@fer.unizg.hr}

\begin{document}

\begin{abstract}
	We consider a one-parameter family of isotropic planar Gaussian processes
	\[
		\iproc \si t =B_t+\sigma t Z,\qquad 0\le t\le 1,\quad 0\le \sigma\le 1,
	\]
	where $B$ is a standard ($0$-to-$0$) planar Brownian bridge on $[0,1]$, and $Z\sim \up N(0,I)$ is a standard Gaussian random vector independent of $B$. The family interpolates between standard planar Brownian bridge ($\sigma=0$) and standard planar Brownian motion ($\sigma=1$). As the main result of the paper we compute the expected perimeter and area of the convex hull of the random set $\vit{\iproc \si t \st 0\le t\le 1}$ as closed formulas in terms of $\sigma$, and recover the classical Brownian bridge and Brownian motion values at $\sigma=0$ and $\sigma=1$. We also consider the convex hull spanned by multiple independent processes of this type and the possibilities for closed formulas in special cases. The key observation in our argument is that the isotropy property reduces the expected perimeter and area to one-dimensional quantities through the support function and Cauchy's formulas.
\end{abstract}

\subjclass[2020]{60J65, 60D05, 52A22, 52A10}
\keywords{Brownian bridge, Brownian motion, Cauchy's formula, convex hull.}

\maketitle

\section{Introduction}
Let $B=\vit{B_t \st 0 \le t \le 1}$ be a standard ($0$-to-$0$) planar Brownian bridge and let
$Z\sim \up N(0,I)$ be a standard planar Gaussian random vector independent of $B$. For a parameter $\sigma\in[0,1]$, define the \emph{Gaussian-endpoint Brownian bridge}
\begin{equation}\label{eq:def-process}
	\iproc \si t=  B_t+\sigma t Z,\qquad 0\le t\le 1.
\end{equation}

\begin{figure}[htbp]
	\centering

	\begin{subfigure}[t]{0.48\textwidth}
		\centering
		\includegraphics[width=\linewidth]{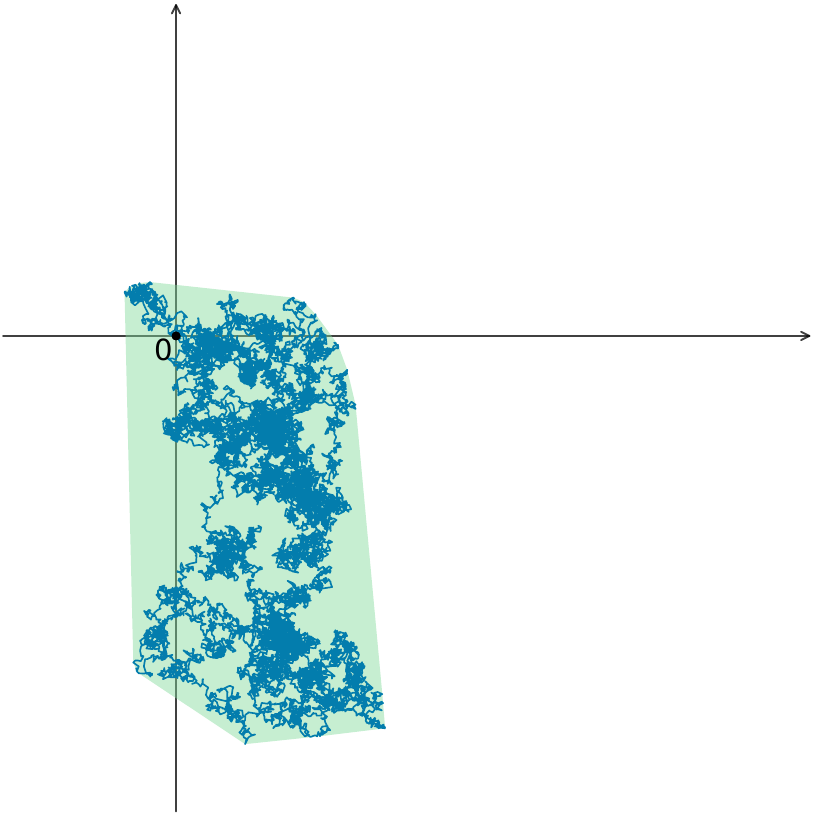}
		\caption{$\sigma=0$: Brownian bridge.}
		\label{fig:bb-random-endpoint-sigma-0}
	\end{subfigure}
	\hfill
	\begin{subfigure}[t]{0.48\textwidth}
		\centering
		\includegraphics[width=\linewidth]{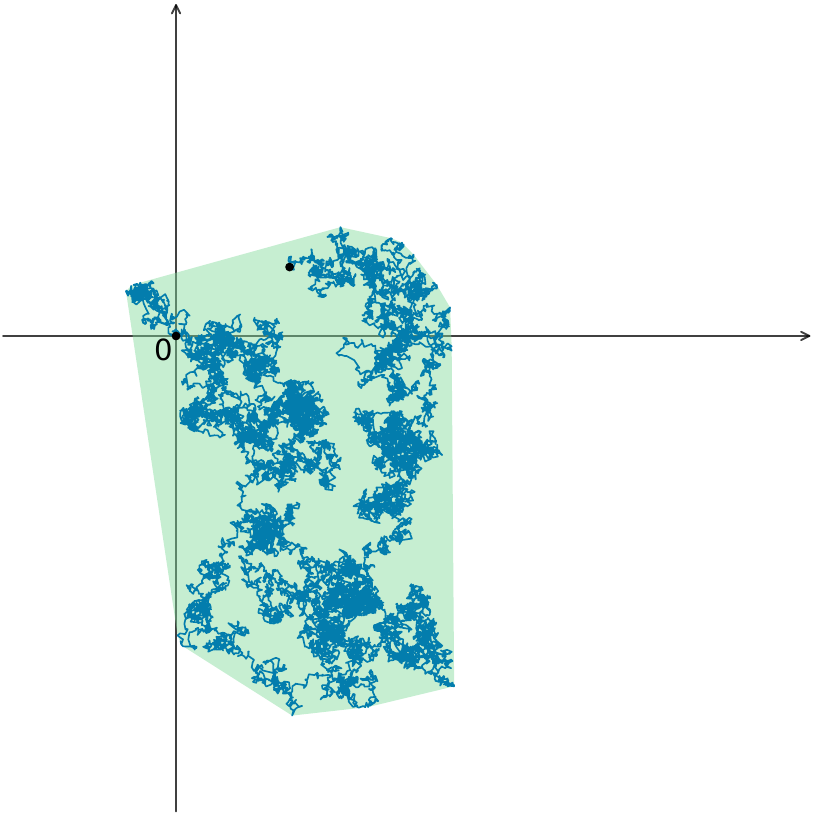}
		\caption{$\sigma=0.3$.}
		\label{fig:bb-random-endpoint-sigma-03}
	\end{subfigure}

	\vspace{0.8em}

	\begin{subfigure}[t]{0.48\textwidth}
		\centering
		\includegraphics[width=\linewidth]{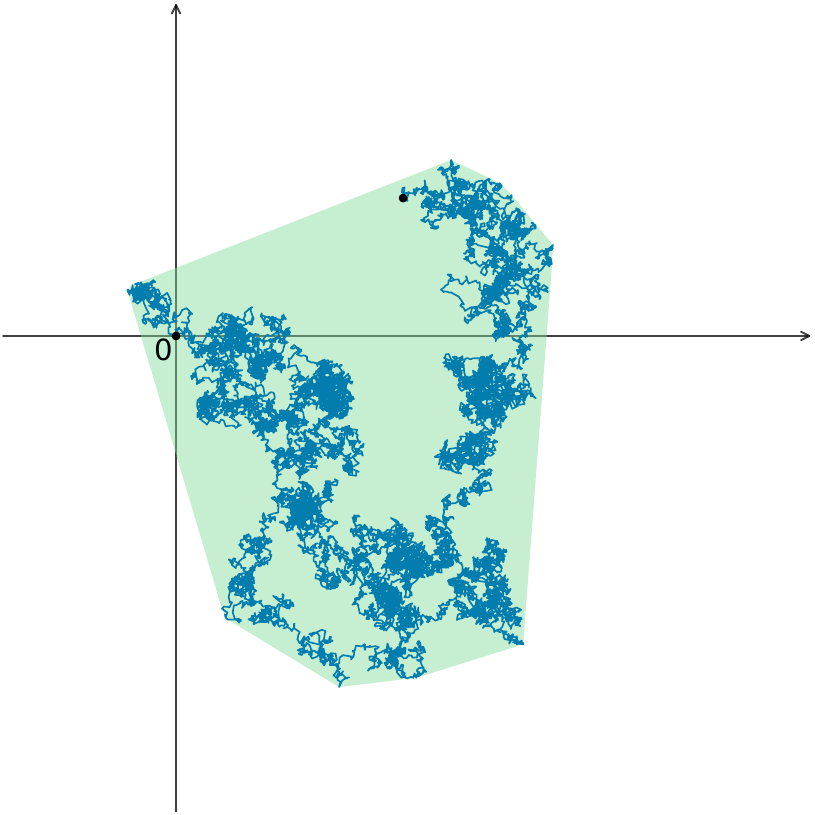}
		\caption{$\sigma=0.6$.}
		\label{fig:bb-random-endpoint-sigma-06}
	\end{subfigure}
	\hfill
	\begin{subfigure}[t]{0.48\textwidth}
		\centering
		\includegraphics[width=\linewidth]{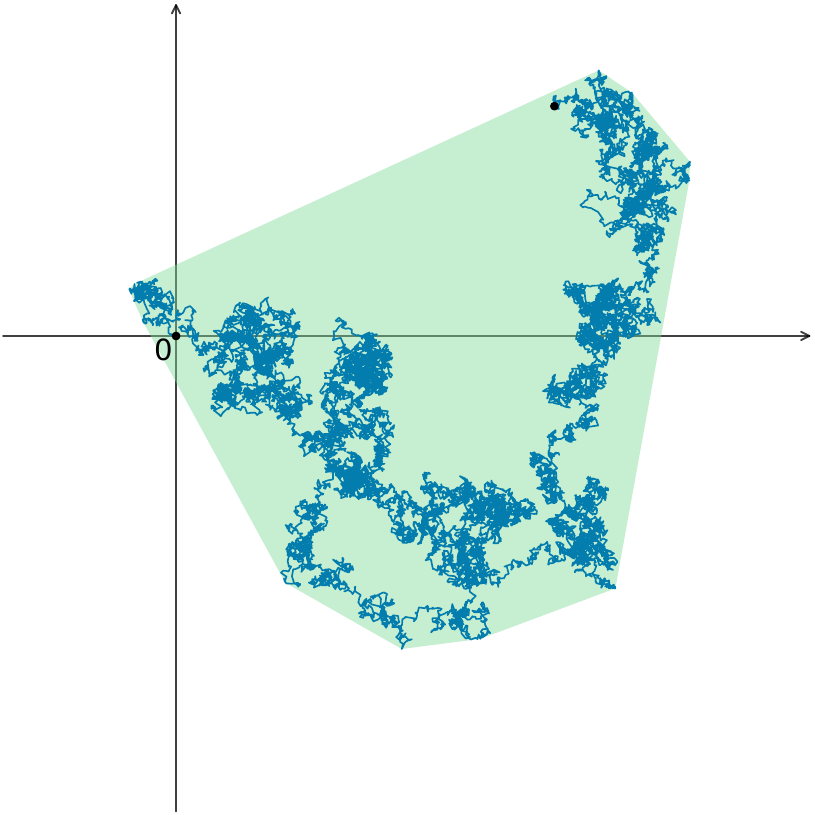}
		\caption{$\sigma=1$: Brownian motion.}
		\label{fig:bb-random-endpoint-sigma-1}
	\end{subfigure}

	\caption{The same planar Brownian bridge realization transformed into Gaussian-endpoint bridges, together with their convex hulls.}
	\label{fig:1}
\end{figure}

The process \( \iprocs \si \) is a standard planar Brownian bridge when \( \si=0 \) and planar Brownian motion when
\( \si=1 \) (see Figure~\ref{fig:1}). Both claims are clear from its covariance structure derived in Subsection~\ref{ssec:prelim-basics}.
Conditionally on \(Z\), the process in~\eqref{eq:def-process}
is a planar Brownian bridge from \(0\) to the endpoint \(\sigma Z\). One-dimensional Brownian bridges with random or unknown pinning points have appeared in several contexts, especially in optimal stopping and financial modeling. For instance, in~\cite{leung2018optimal} the authors studied randomized Brownian bridges with various prior distributions for the random endpoint, motivated by optimal trading problems. In~\cite{ekstrom_vaicenavicius} the optimal stopping of a Brownian bridge with an unknown pinning point is considered, and in~\cite{louriki} the author studies Brownian bridges with random length and random pinning point in financial information models.

In this paper, we focus on the convex hull spanned by the process' trajectory
\begin{equation}
	K_\si = \hull \vit{\iproc \si t \st 0 \le t \le 1}.
\end{equation}
The convex hull of planar Brownian motion has been studied for a long time. Already L\'{e}vy in~\cite{Levy} considered certain properties of its boundary and conjectured that the boundary of the Brownian convex hull $K_1$ should be of class $C^1$. This conjecture was proved in~\cite{elbachir}; see also~\cite{Cranston} for another proof. Our focus in this paper is on certain geometric functionals of these random sets, namely the perimeter and area. The first paper dealing with the perimeter of $K_1$ is by Letac and Takacs~\cite{letac_takacs} where it was shown that $\E [\per K_1]  = \sqrt{8\pi}$. The result for Brownian bridge is due to Goldman~\cite{goldman} who showed that $\E [\per K_0] = \sqrt{\pi^3/2}$. When it comes to the area, El Bachir in~\cite{elbachir} obtained that $\E [\area K_1] = \pi/2$, and this result was generalized to the Brownian bridge by Majumdar et al.\ in~\cite{majumdar} where it was proved that $\E[ \area K_0] = \pi/3$. In the same paper, the above results have been generalized to convex hulls of multiple independent standard planar Brownian motions, and to convex hulls of multiple independent standard planar Brownian bridges. In~\cite{sebek} the convex hull of a combination of Brownian motions and bridges was considered.
Other work in this field deals with random walks~\cite{Spitzer-Widom, Baxter, Snyder-Steele, Vysotsky-Zapor, Cygan-Sandric-Sebek, CSSW, Akopyan_Vysotsky, KVZ, LH_W, McR_W, Wade_Xu-PAMS, Wade_Xu-SPA}, processes in multiple dimensions~\cite{Kabluchko-Zapor-TAMS, Eldan, Molchanov-Wespi, Molchanov, high_dim_hulls}, and other functionals such as the diameter~\cite{McRedmond_Xu, Jovalekic, CPS}.

This paper deals with the perimeter and area of the convex hull $K_\si$,
as well as the convex hull spanned by multiple independent processes of the type. Thereby
we also generalize a number of the above results. The guiding principle is the same as in the classical support function approach to planar random convex hulls---by isotropy, one can reduce the Cauchy formulas for perimeter and area to a single direction, hence to a one-dimensional projected process.
This is justified in Section~\ref{sec:preliminaries}.
In Section~\ref{sec:perimter} we study the expected perimeter of  $K_\si$. Our main result states that
\begin{equation*}
	\E [\per K_\si] = \sqrt{2\pi} \ob{\sigma+\frac{\arccos\sigma}{\sqrt{1-\sigma^2}}}.
\end{equation*}
Section~\ref{sec:area} deals with the expected area of  $K_\si$. We prove that
\begin{equation}
	\E [\area K_\si] = \frac \pi 3 \cdot \frac{\si^2 + \si + 1}{\si + 1}.
\end{equation}
In Section~\ref{sec:multiple_processes}, the convex hull spanned by multiple independent processes is studied. We obtain a closed formula for the expected perimeter of the convex hull spanned by two independent processes (see Theorem~\ref{tm:Eperim2proc}). We argue that a closed formula seems not to exist for the expected area or whenever more than two independent processes are considered. Finally, we briefly consider the asymptotics of the shape, as well as the expected perimeter and area, as the number of processes tends to infinity. These are corollaries of results shown by Davydov~\cite{davydov}.

\section{Preliminaries and basic properties}\label{sec:preliminaries}

\subsection{Preliminaries}\label{ssec:prelim-basics}
Recall that the standard planar Brownian bridge \( B \) is a centered Gaussian process with covariance (see \cite[p.\ 64]{borodin_salminen})
\begin{equation}\label{eq:bridge-cov}
	\E[B_sB_t^\top]=(\min\{s,t\}-st)I,\qquad 0\le s,t\le 1,
\end{equation}
where we identify the process at a given time with the appropriate column-vector and $\tau$ denotes transposing.
With simple calculations we establish the covariance structure of the process in~\eqref{eq:def-process}.
Using the independence of \( B \) and \( Z \),
\begin{equation}\label{eq:covariance}
	\begin{aligned}
		\E \ug{ \iproc \si s {\iproc \si t}^\top } & = \E \ug{B_s B_t^\top} + \E \ug{ZZ^\top}  \\
		                                           & = \ob{\min \vit{s,t} + (\sigma^2-1) st}I,
		\qquad 0\le s,t\le 1.
	\end{aligned}
\end{equation}
Note that by setting $\sigma = 0$ in \eqref{eq:covariance} we get the covariance function of a standard planar Brownian bridge, while setting $\sigma = 1$ gives us the covariance function of standard planar Brownian motion. Hence, as mentioned above, the process \( \iprocs \si \) can be viewed as a natural Gaussian interpolation between the Brownian bridge and Brownian motion.

\begin{remark}[An equivalent interpolation]
	It turns out that the presented interpolation between Brownian bridge and Brownian motion is equivalent to an another very natural way of interpolating between the two processes. Let \(W\) be a standard planar Brownian motion, and let \(B\) be an independent standard ($0$-to-$0$) planar Brownian bridge. For \(\lambda\in\cc{0,1}\), define
	\[
		Y_\lambda(t)=
		\sqrt{1-\lambda} \, W_t+\sqrt{\lambda} \, B_t,
		\qquad 0\le t\le1.
	\]
	Then \(Y_\lambda\) is a centered Gaussian process with covariance
	\begin{align}
		\E \ug{ Y_\lambda(s) Y_\lambda(t)^\top }
		 & = \ug{ (1-\lambda)\min \vit{s,t}
		+ \lambda(\min \vit{s,t} - st)} I      \\
		 & = \ob{\min \vit{s,t}-\lambda st} I.
	\end{align}
	Therefore
	$
		\iprocs \si   \jpod Y_\lambda
	$
	whenever
	$
		\lambda=1-\sigma^2.
	$
	However, the representation in~\eqref{eq:def-process}
	is more convenient for our purposes, since conditionally on the endpoint $\iproc \si 1$ the process
	clearly reduces to a planar Brownian bridge with a deterministic endpoint (though not necessarily the usual $0$).

\end{remark}

A key property of the process is isotropy,
i.e.\ for every orthogonal \( 2 \times 2 \) matrix $Q$,
the processes \( \iprocs \si \) and \( Q \iprocs \si \) are identically distributed.
Namely, it is known that both \( B \) and \( P \coloneq (tZ)_{0\le t\le1} \) are isotropic. Since they are mutually independent,
\begin{equation}
	\ob{ QB, QP } \jpod \ob{ B,  P}
\end{equation}
and the claim follows.

\subsection{Support functions and the reduction to one dimension}
We denote
\begin{equation}
	e_\theta = (\cos \theta, \sin \theta), \quad
	e_\theta^\perp = (-\sin \theta, \cos \theta), \qquad \theta \in \co{0,2\pi}.
\end{equation}
The support function of the convex hull \( K_\sigma \) in direction \( \theta \)
is given by
\begin{equation}
	M_\sigma(\theta) = \sup_{0\le t\le1} \skp{\iproc \si t}{e_\theta}.
\end{equation}
Recall Cauchy's formulas for the perimeter \cite{cauchy1832memoire, tsukerman_veomett} and area \cite{hsiung} of convex compact sets:
\begin{equation}
	\per K_\sigma = \int_0^{2\pi} M_\sigma(\theta) \D \theta, \qquad
	\area K_\sigma = \frac 12 \int_0^{2\pi} \ug{M_\sigma(\theta)^2 - M_\sigma'(\theta)^2}  \D \theta.
\end{equation}
Next, observe that due to isotropy the processes \( \skp {\iprocs \si}{e_0}  \) and
\( \skp{\iprocs \si}{e_\theta} \) have the same law for any \( \theta \). Therefore
\( \E [M_\sigma(\theta)] = \E [M_\sigma(0) \)], independently of \( \theta \). Integrating,
we arrive at
\( \E [\per K_\sigma] = 2\pi \E [M_\sigma(0)] \). Hence, we have
reduced the problem of the expected perimeter to a single dimension. The same can be done with the expected area.
The simplified formulas for the expected perimeter and area are therefore
\begin{equation}\label{eq:reduxformule}
	\E [\per K_\sigma] = 2\pi \E [M_\sigma(0)],
	\qquad
	\E [\area K_\sigma] = \pi \ob{ \E [M_\sigma(0)^2] - \E [M'_\sigma(0)^2]}.
\end{equation}

\subsection{The projected one-dimensional process}\label{ssec:projproc}
According to the previous discussion, the problem of the expected perimeter and area reduce to the one dimesional process
\begin{equation}
	\gproc x \si t = \skp{ \iproc \si t }{e_0}, \qquad 0\le t\le1,
\end{equation}
which can also be written as
\begin{equation}
	\gproc x\si t = B_t \nderi 1 + \sigma t N,
\end{equation}
where \( B \nderi 1 \) is a standard \( 0 \)-to-\( 0 \) one-dimensional Brownian bridge, and
\( N \sim \up N (0, 1) \) is an independent standard normal random variable. Then \( \E [\per K_\sigma] = 2\pi \E [M_\sigma] \), where
\( M_\sigma = \sup_{0\le t\le1} \gproc x \si t \) is the simple one-dimensional maximum. In what follows it will be convenient to use conditioning on the random endpoint
\begin{equation}
	\gproc x \si 1 \eqcolon Y = \sigma N \sim \up N(0, \sigma^2).
\end{equation}
Conditionally on \( Y=y \), the process \( \gprocs x \si \) is a one-dimensional
\( 0 \)-to-\( y \) Brownian bridge. We recall a classical result (see \cite[Theorem 2.1 and Remark 2.1]{beghin_orsingher})
on the distribution of the maximum of such a process.

\begin{lemma}\label{lma:br0ymax}
	Let $B^{0\to y}$ be a one-dimensional \( 0 \)-to-\( y \) Brownian bridge on $[0,1]$, and let
	\[
		M \nderi y = \max_{0\le t\le 1}B_t^{0\to y}.
	\]
	Then for every $m \ge 0$
	\begin{equation}\label{eq:br0ymax}
		\P(M^{(y)}\le m)=
		\begin{cases}
			0,              & m<\max\{y,0\},    \\[0.5ex]
			1-e^{-2m(m-y)}, & m\ge \max\{y,0\}.
		\end{cases}
	\end{equation}
\end{lemma}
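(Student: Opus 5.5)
The plan is to derive the distribution of the maximum from the reflection principle for one-dimensional Brownian motion $W$, conditioned on its endpoint $W_1 = y$. We use the standard fact that a $0$-to-$y$ Brownian bridge on $[0,1]$ is the law of $W$ conditioned on $W_1=y$. Since $B^{0\to y}_0=0$ and $B^{0\to y}_1=y$, the maximum is at least $\max\{y,0\}$ almost surely. This gives the first case of \eqref{eq:br0ymax}, so it suffices to treat $m\ge\max\{y,0\}$.

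For such $m$, I would compute the joint law of $(\max_{0\le t\le 1} W_t, W_1)$ by the reflection principle. Let $\tau_m$ be the hitting time of level $m$. Reflecting the path after $\tau_m$ and using the strong Markov property gives, for $y\le m$,
\begin{equation}
	\P\Bigl(\max_{0\le t\le1} W_t\ge m,\ W_1\in \D y\Bigr)=\P(W_1\in 2m-\D y)=\frac{1}{\sqrt{2\pi}}e^{-(2m-y)^2/2}\D y .
\end{equation}
Dividing by the density $\frac{1}{\sqrt{2\pi}}e^{-y^2/2}$ of $W_1$ at $y$ yields
\begin{equation}
	\P\bigl(M^{(y)}\ge m\bigr)=\exp\Bigl(-\tfrac{(2m-y)^2-y^2}{2}\Bigr)=e^{-2m(m-y)} .
\end{equation}
Taking complements gives the claimed formula. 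The event $\{M^{(y)}=m\}$ is null, so strict versus non-strict inequalities do not matter.

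The only delicate point is justifying the conditioning on the null event $\{W_1=y\}$. I would handle it by working with the joint density: the regular conditional law of $W$ given $W_1=y$ is the bridge, and the conditional probability is the ratio of the joint density to the marginal density, both continuous in $y$. Alternatively, one can avoid conditioning altogether. Use the representation $B^{0\to y}_t = W_t - tW_1 + ty$, where $W_t-tW_1$ is independent of $W_1$. Then the conditional law of $W$ given $W_1=y$ is exactly the law of $B^{0\to y}$, and the disintegration above is rigorous. Since the paper cites this result from the literature, this level of detail should suffice.
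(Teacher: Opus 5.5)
Your argument is correct. The paper does not actually prove this lemma: it quotes Beghin and Orsingher (Theorem 2.1 and Remark 2.1 there). Your reflection-principle computation $\varphi(2m-y)/\varphi(y)=e^{-2m(m-y)}$ is therefore a self-contained replacement for that citation. It also matches the viewpoint the paper adopts later, where the $0$-to-$y$ bridge is treated as Brownian motion conditioned on $W_1=y$ and joint densities are divided by $\varphi(y)$ (Proposition~\ref{prop:denstauy} and the density of $(\tau^{(y)},M^{(y)})$). The citation buys brevity; your route buys transparency, at the price of one detail you should tighten.

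Disintegrating with respect to $W_1$, even through the explicit decomposition $W_t=(W_t-tW_1)+tW_1$, identifies $\P(M^{(y)}\ge m)$ with $e^{-2m(m-y)}$ only for Lebesgue-a.e.\ $y\le m$. The lemma, however, is stated for every $y$, and continuity of the ratio on the right says nothing about the left-hand side at a single $y$. The coupling you already use closes this. With $B^{0\to y}_t=B^0_t+ty$, the map $y\mapsto M^{(y)}$ is pathwise nondecreasing, so $y\mapsto\P(M^{(y)}\ge m)$ is monotone. A monotone function that agrees a.e.\ with a continuous one agrees with it everywhere.

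Alternatively, you can avoid conditioning on null events entirely. Write $B^0_t=(1-t)W_{t/(1-t)}$ and substitute $s=t/(1-t)$. For $m>\max\{y,0\}$ the event $\{M^{(y)}\ge m\}$ becomes $\{W_s\ge m+(m-y)s\text{ for some }s\ge 0\}$. Its probability $e^{-2m(m-y)}$ is the classical linear-boundary crossing formula for Brownian motion, and the case $m=\max\{y,0\}$ is trivial.
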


\section{The expected perimeter (single process)}\label{sec:perimter}
In this section, we  derive a closed formula for \( \E [\per K_\sigma] \).
By~\eqref{eq:reduxformule} the problem reduces to finding the expected value of the maximum \( M_\sigma \) of the projected one-dimensional process. First, we obtain an explicit expression for the cumulative distribution function of \( M_\sigma \). To do so, we use the aforementioned
conditioning on the random endpoint \( Y \). Afterwards, by employing the tail-integral formula we derive \( \E [M_\sigma] \).

\begin{proposition}\label{prop:cdf}
	Let $0<\sigma\le 1$, and let $\Phi$ denote the standard normal distribution function. Then for every $m\ge 0$,
	\begin{equation}\label{eq:cdf}
		\P(M_\sigma\le m)
		=
		\Phi \ob{\frac m\sigma}
		-
		e^{-2(1-\sigma^2)m^2}
		\Phi \ob{\frac{(1-2\sigma^2)m}\sigma}.
	\end{equation}
	For $\sigma=0$,
	\begin{equation}\label{eq:cdf0}
		\P(M_0\le m)=1-e^{-2m^2},\qquad m\ge 0.
	\end{equation}
\end{proposition}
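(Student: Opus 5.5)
We condition on the endpoint $Y=\sigma N\sim \up N(0,\sigma^2)$ and integrate the conditional distribution function from Lemma~\ref{lma:br0ymax} against the density of $Y$. Conditionally on $Y=y$, the projected process $\gprocs x\si$ is a $0$-to-$y$ Brownian bridge. For $m\ge 0$ the lemma therefore gives
\[
\P(M_\sigma\le m\mid Y=y)=\ob{1-e^{-2m(m-y)}}\mathbf 1_{\{y\le m\}},
\]
because $m\ge 0$ makes the condition $m\ge\max\{y,0\}$ equivalent to $y\le m$. Writing $\varphi$ for the standard normal density, we get
\[
\P(M_\sigma\le m)=\int_{-\infty}^{m}\ob{1-e^{-2m(m-y)}}\frac1\sigma\varphi\ob{\frac y\sigma}\D y .
\]

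The first part of the integral is simply $\P(Y\le m)=\Phi(m/\sigma)$. The second part is
\[
\frac{e^{-2m^2}}{\sigma\sqrt{2\pi}}\int_{-\infty}^m e^{2my-y^2/(2\sigma^2)}\D y .
\]
To evaluate it, we complete the square in the exponent:
\[
-\frac{y^2}{2\sigma^2}+2my=-\frac{(y-2m\sigma^2)^2}{2\sigma^2}+2m^2\sigma^2 .
\]
The remaining integral is a shifted Gaussian integral, equal to $\sigma\sqrt{2\pi}\,\Phi\ob{(m-2m\sigma^2)/\sigma}$. Combining the prefactors gives $e^{-2m^2+2m^2\sigma^2}=e^{-2(1-\sigma^2)m^2}$, multiplied by $\Phi\ob{(1-2\sigma^2)m/\sigma}$. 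This is exactly \eqref{eq:cdf}.

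The case $\sigma=0$ is immediate. Here $Y=0$ almost surely, so $\gprocs x 0$ is a standard bridge, and Lemma~\ref{lma:br0ymax} with $y=0$ gives \eqref{eq:cdf0} directly.

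The only points needing care are bookkeeping. First, the indicator must be handled correctly for negative $y$: the lemma still applies because $m\ge0\ge y$ in that range. Second, the completed square must have the right sign. Using the joint measurability of the conditional law (a regular conditional distribution of $\gprocs x\si$ given $Y$) to justify the disintegration is standard, since $\gprocs x\si$ is a Brownian bridge from $0$ to $y$ given $Y=y$.
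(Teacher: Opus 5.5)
Your proposal is correct and is essentially the paper's own proof. You condition on $Y\sim \up N(0,\sigma^2)$, integrate the conditional distribution function from Lemma~\ref{lma:br0ymax} over $y\le m$, and complete the square $-\frac{y^2}{2\sigma^2}+2my=-\frac{(y-2m\sigma^2)^2}{2\sigma^2}+2m^2\sigma^2$ to get $e^{-2(1-\sigma^2)m^2}\Phi\ob{(1-2\sigma^2)m/\sigma}$, and the case $\sigma=0$ is read off from the lemma with $y=0$, exactly as in the paper.
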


\begin{proof}
	Fix \( 0 < \sigma \le 1 \) and \( m \ge 0 \). Conditioning on \( Y \sim \up N(0, \si^2) \) and using Lemma~\ref{lma:br0ymax},
	\begin{align}
		\P(M_\sigma \le m) & = \int_{-\infty}^m \ob{1 - e^{-2m(m-y)}} \frac 1{\sigma \sqrt{2\pi}} e^{-y^2/2\si^2} \D y                                       \\
		                   & = \Phi\ob{\frac m\sigma} - e^{-2m^2} \int_{-\infty}^m e^{2my} \frac 1{\sigma \sqrt{2\pi}} e^{-y^2/2\si^2} \D y                  \\
		                   & = \Phi\ob{\frac m\sigma} - e^{-2m^2} e^{2\si^2m^2} \int_{-\infty}^m \frac 1{\si\sqrt2{\pi}} e^{-\frac{(y - 2\si^2m)^2}{2\si^2}} \\
		                   & = \Phi\ob{\frac m\si} - e^{-2(1-\si^2)m^2} \Phi \ob{\frac{(1-2\sigma^2)m}\sigma}.
	\end{align}

	\noindent	The claim for \( \sigma=0 \) is immediate from Lemma~\ref{lma:br0ymax}.
\end{proof}

\begin{remark}\label{re:sing1}
	Under the convention $m/0 = \infty$ and $\Phi(\infty) = 1$,~\eqref{eq:cdf0}
	can be viewed as a special case of~\eqref{eq:cdf}. With this in mind,
	throughout the paper we will not separately consider the case $\si=0$.
\end{remark}

\noindent We now  compute explicit expression for $\E [M_\sigma]$.

\begin{theorem}\label{thm:mean-max}
	For $0\le \sigma<1$,
	\begin{equation}\label{eq:mean-max}
		\E [M_\sigma]
		=
		\frac{1}{\sqrt{2\pi}}
		\ob{\sigma+\frac{\arccos\sigma}{\sqrt{1-\sigma^2}}}.
	\end{equation}
	At $\sigma=1$, the continuous extension is
	\begin{equation}
		\E [M_1]=\sqrt{\frac{2}{\pi}}.
	\end{equation}
\end{theorem}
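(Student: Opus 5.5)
The plan is to integrate the tail of the distribution function from Proposition~\ref{prop:cdf}. Since $M_\sigma\ge 0$ (because $\gproc x \si 0=0$), we have $\E[M_\sigma]=\int_0^\infty \P(M_\sigma>m)\D m$. By \eqref{eq:cdf},
\[
\P(M_\sigma>m)=\ob{1-\Phi\ob{\tfrac m\sigma}}+e^{-am^2}\Phi(bm),\qquad a=2(1-\sigma^2),\quad b=\frac{1-2\sigma^2}{\sigma}.
\]
Both terms are nonnegative and integrable for $\sigma<1$, since $a>0$. So I will integrate them separately.

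\textbf{First term.} It is the tail of $\sigma N$ restricted to $m\ge0$. Hence $\int_0^\infty (1-\Phi(m/\sigma))\D m=\sigma\,\E[N^+]=\sigma/\sqrt{2\pi}$, which produces the summand $\sigma$ in \eqref{eq:mean-max}.

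\textbf{Second term.} Write $\Phi(bm)=\tfrac12+\tfrac12\operatorname{erf}(bm/\sqrt2)$. Then use the classical integral
\[
\int_0^\infty e^{-am^2}\operatorname{erf}(cm)\D m=\frac{1}{\sqrt{\pi a}}\arctan\frac{c}{\sqrt a}.
\]
It follows by differentiating in $c$ under the integral sign, which yields the elementary integral $\tfrac{2}{\sqrt\pi}\int_0^\infty m e^{-(a+c^2)m^2}\D m=\tfrac{1}{\sqrt\pi\,(a+c^2)}$, and then integrating back from $c=0$. This gives
\[
\int_0^\infty e^{-am^2}\Phi(bm)\D m=\frac12\sqrt{\frac{\pi}{a}}\ob{\frac12+\frac1\pi\arctan\frac{b}{\sqrt{2a}}}.
\]

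\textbf{Main obstacle: simplifying the arctangent.} The key step is to turn this into $\arccos\sigma$. I will substitute $\sigma=\cos\varphi$ with $\varphi\in(0,\pi/2]$. Then $1-2\sigma^2=-\cos2\varphi$ and $2\sigma\sqrt{1-\sigma^2}=\sin2\varphi$, so
\[
\frac{b}{\sqrt{2a}}=\frac{1-2\sigma^2}{2\sigma\sqrt{1-\sigma^2}}=-\cot2\varphi .
\]
Since $2\varphi\in(0,\pi)$, we get $\arctan(-\cot2\varphi)=2\varphi-\pi/2$. The bracket therefore equals $2\varphi/\pi$, and the second integral becomes
\[
\frac{\varphi}{\sqrt{2\pi}\sqrt{1-\sigma^2}}=\frac{\arccos\sigma}{\sqrt{2\pi}\sqrt{1-\sigma^2}}.
\]
Adding the two pieces gives \eqref{eq:mean-max} for $0<\sigma<1$. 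The branch of the arctangent is the point to check carefully, in particular at $\sigma=1/\sqrt2$ where $b=0$.

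\textbf{The endpoint cases.} For $\sigma=0$ (Remark~\ref{re:sing1}), $\E[M_0]=\int_0^\infty e^{-2m^2}\D m=\sqrt{\pi/8}=\frac{\pi/2}{\sqrt{2\pi}}$, which agrees with the formula since $\arccos0=\pi/2$.

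For $\sigma=1$, we have $\arccos\sigma/\sqrt{1-\sigma^2}\to1$ as $\sigma\to1^-$, so the right-hand side tends to $2/\sqrt{2\pi}=\sqrt{2/\pi}$. To confirm that this limit is the actual value, note that \eqref{eq:cdf} with $\sigma=1$ reads $\P(M_1\le m)=\Phi(m)-\Phi(-m)=2\Phi(m)-1$. This is the reflection principle, so $M_1\jpod|N|$ and $\E[M_1]=\sqrt{2/\pi}$.
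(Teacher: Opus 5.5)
Your proof is correct and follows the paper's route. You use the tail-integral split into the folded-normal piece $\sigma/\sqrt{2\pi}$ and the Gaussian integral $\int_0^\infty e^{-am^2}\Phi(bm)\D m$, then the substitution $\sigma=\cos\theta$, which turns $\frac12+\frac1\pi\arctan(-\cot 2\theta)$ into $2\theta/\pi$. The differences are minor: you derive \eqref{eq:Jab} yourself by differentiating under the integral sign instead of citing the table, and at $\sigma=1$ you read $\P(M_1\le m)=2\Phi(m)-1$ directly from \eqref{eq:cdf} (legitimate, since Proposition~\ref{prop:cdf} covers $\sigma=1$). That endpoint argument is more direct than the paper's convergence-in-distribution plus uniform-integrability argument, and it avoids the forward reference to Proposition~\ref{prop:M2}.
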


\begin{proof}
	Fix \( 0 \le  \sigma < 1 \).
	We use the tail-integral formula for the expectation and use Proposition~\ref{prop:cdf}.
	We split the result into two integrals which we evaluate separately. It holds that
	\begin{align}
		\E [M_\sigma] & = \int_0^\infty \P(M_\sigma > m) \D m                                                                     \\
		              & = \int_0^\infty \ob {1 - \Phi\ob{\frac m\si} + e^{-2(1-\si^2)m^2} \Phi\ob{\frac{(1-2\si^2)m}{\si}} } \D m \\
		              & = \int_0^\infty \ob{1 - \Phi\ob{\frac m\si}} \D m
		+ \int_0^\infty e^{-2(1-\si^2)m^2} \Phi\ob{\frac{(1-2\si^2)m}\si} \D m                                                    \\ &\eqcolon I_1(\sigma) + I_2(\sigma).
	\end{align}

	We evaluate \( I_1(\sigma) \) first.
	Using the substitution \( u = m/\si \) and then integrating by parts gives
	\begin{equation}
		I_1(\sigma) = \sigma \int_0^\infty (1 - \Phi(u)) \D u
		= \si \int_0^\infty u\varphi(u) \D u,
	\end{equation}
	which is half the expectation of a folded normal variable. Therefore
	\begin{equation}\label{eq:I1}
		I_1(\sigma) = \frac \si{\sqrt{2\pi}}.
	\end{equation}

	\noindent 	Using~\cite[1,010.2]{owen}
	we get for $a>0$ and $b\in\R$,
	\begin{equation}\label{eq:Jab}
		\int_0^\infty e^{-a x^2}\Phi(bx) \D x = \frac{\sqrt{\pi}}{4\sqrt{a}} + \frac{1}{2\sqrt{\pi a}} \arctan\ob{\frac{b}{\sqrt{2a}}}.
	\end{equation}
	We can evaluate \( I_2(\sigma) \) as~\eqref{eq:Jab} where
	\begin{equation}
		a=2(1-\sigma^2),\qquad b=\frac{1-2\sigma^2}{\sigma},
	\end{equation}
	yielding
	\begin{equation}\label{eq:I2raw}
		I_2(\sigma)
		=
		\frac{1}{2\sqrt{2\pi(1-\sigma^2)}}
		\ob{
			\frac{\pi}{2}
			+
			\arctan\ob{\frac{1-2\sigma^2}{2\sigma\sqrt{1-\sigma^2}}}
		}.
	\end{equation}
	This can be further refined by setting \( \si = \cos \theta \) for some \( \theta \in \oc{0, \pi/2} \), i.e.
	\begin{equation}
		\frac{1 - 2\si^2}{2\si\sqrt{1-\si^2}} = \frac{-\cos 2\theta}{ \sin 2\theta} = -\ctg 2 \theta
	\end{equation}
	and then
	\begin{equation}
		\frac \pi 2 + \arctan (-\ctg 2 \theta) = 2 \theta = 2\arccos \sigma.
	\end{equation}
	We arrive at
	\begin{equation}
		I_2(\sigma) = \frac 1 {\sqrt{2\pi}} \frac{\arccos \si}{\sqrt{1 - \sigma^2}},
	\end{equation}
	which combined with~\eqref{eq:I1} proves~\eqref{eq:mean-max}.

	To formally justify the continuous extension as $\si \uparrow 1$, first note that by continuous mapping theorem
	$M_\si \konvd  M_1$ as $\left. \si \uparrow 1\right.$. This together with the uniform
	boundedness of second moments (see
	Proposition~\ref{prop:M2}) show the uniform integrability
	of $\vit{M_\si \st 0 \le \si < 1}$ and $\E [M_\si] \to \E [M_1]$
	as $\left. \si \uparrow 1\right.$.
\end{proof}

As a direct concequence of Theorem \ref{thm:mean-max} we get the following.
\begin{corollary}
	For \( 0 \le \si < 1 \),
	\begin{equation}
		\E [\per K_\sigma] = \sqrt{2\pi} \ob{\sigma+\frac{\arccos\sigma}{\sqrt{1-\sigma^2}}},
	\end{equation}
	and for \( \sigma=1 \), \( \E [\per K_1] = \sqrt{8\pi} \).
\end{corollary}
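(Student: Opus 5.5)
The corollary follows by combining the reduction formula~\eqref{eq:reduxformule} with Theorem~\ref{thm:mean-max}. There is essentially one step plus one check at the endpoint $\sigma=1$.

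First, recall how~\eqref{eq:reduxformule} was obtained. Cauchy's formula gives $\per K_\sigma=\int_0^{2\pi}M_\sigma(\theta)\D\theta$. Since $M_\sigma(\theta)\ge \skp{\iproc\si 0}{e_\theta}=0$, the integrand is nonnegative, so Tonelli's theorem lets us exchange expectation and integral without any integrability assumption. By isotropy, $\E[M_\sigma(\theta)]=\E[M_\sigma(0)]$ for every $\theta$, and $M_\sigma(0)$ is exactly the one-dimensional maximum $M_\sigma$ of the projected process $\gprocs x\si$ from Subsection~\ref{ssec:projproc}. Hence
\[
\E[\per K_\sigma]=2\pi\,\E[M_\sigma].
\]

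For $0\le\sigma<1$, multiply~\eqref{eq:mean-max} by $2\pi$. Using $2\pi/\sqrt{2\pi}=\sqrt{2\pi}$, this gives
\[
\E[\per K_\sigma]=\sqrt{2\pi}\ob{\sigma+\frac{\arccos\sigma}{\sqrt{1-\sigma^2}}},
\]
which is the stated formula.

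For $\sigma=1$, take the value $\E[M_1]=\sqrt{2/\pi}$ from Theorem~\ref{thm:mean-max}; this is the classical identity $\E[\max_{t\le1}W_t]=\E|W_1|=\sqrt{2/\pi}$ from the reflection principle. It gives $\E[\per K_1]=2\pi\sqrt{2/\pi}=\sqrt{8\pi}$, in agreement with Letac--Tak\'acs.

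The only point needing care is that this value is the true expectation at $\sigma=1$, not just a continuous extension of the formula. I would confirm it in two independent ways. Directly, $M_1$ is the maximum of a standard Brownian motion, so its expectation is $\sqrt{2/\pi}$ with no limit involved. As a consistency check on the formula, substitute $\sigma=\cos\theta$ with $\theta\downarrow0$: then $\arccos\sigma/\sqrt{1-\sigma^2}=\theta/\sin\theta\to1$, so the bracket tends to $2$ and the $\sigma<1$ formula tends to $2\sqrt{2\pi}=\sqrt{8\pi}$, the same value.
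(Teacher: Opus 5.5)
Your proposal is correct. For $0\le\sigma<1$ it is the paper's argument: the reduction $\E[\per K_\sigma]=2\pi\,\E[M_\sigma]$ from \eqref{eq:reduxformule}, followed by multiplying Theorem~\ref{thm:mean-max} by $2\pi$. Your Tonelli remark, which uses $M_\sigma(\theta)\ge 0$ because $X_\sigma(0)=0$, makes explicit an interchange of expectation and integral that the paper performs silently.

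You differ from the paper at $\sigma=1$. The paper proves the closed formula only for $\sigma<1$, since the evaluation via \eqref{eq:Jab} needs $a=2(1-\sigma^2)>0$. It then obtains $\E[M_1]$ as a limit:
\begin{itemize}
\item $M_\sigma\to M_1$ as $\sigma\uparrow 1$, by the continuous mapping theorem;
\item the bound $\E[M_\sigma^2]=(1+\sigma^2)/2\le 1$ from Proposition~\ref{prop:M2} gives uniform integrability;
\item hence $\E[M_\sigma]\to\E[M_1]$, and the limit of the formula is $\sqrt{2/\pi}$.
\end{itemize}

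You instead identify $X_1$ as planar Brownian motion through the covariance \eqref{eq:covariance}. You then use the reflection principle $\max_{t\le 1}W_t\jpod |W_1|$, so the $\sigma\uparrow1$ limit of the formula serves only as a consistency check. Equivalently, you could read the value off Proposition~\ref{prop:cdf} at $\sigma=1$, where the distribution function becomes $2\Phi(m)-1$, the law of $|N|$.

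Your route is more elementary and avoids the paper's forward reference to the second-moment computation. The paper's route stays entirely inside its own distribution-function machinery and invokes no external fact about Brownian maxima, at the cost of a uniform integrability argument.
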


\begin{remark}
	As already commented, the corollary recovers some of the classical results:
	\begin{itemize}
		\item for $\sigma=0$, the expected perimeter of the convex hull of a planar Brownian bridge (see~\cite{goldman}):
		      \[
			      \E [\per K_0]
			      =
			      \sqrt{\frac{\pi^3}{2}};
		      \]
		\item for $\sigma=1$, the expected perimeter of the convex hull of a planar Brownian motion (see~\cite{letac_takacs}):
		      \[
			      \E [\per K_1] =\sqrt{8\pi}.
		      \]
	\end{itemize}
\end{remark}

\section{The expected area (single process)}\label{sec:area}
Recall the Cauchy's area formula from~\eqref{eq:reduxformule}. To compute the second moment
of the maximum, we proceed
analogously as
in Theorem~\ref{thm:mean-max}.
Second moment of the derivative of the maximum, on the other hand, is more subtle---\(M'_\sigma(0)\) is in fact the $y$-coordinate
of the point with the maximal $x$-coordinate.
The approach there is to use conditioning
on the maximizing time, i.e.\ the time at which the maximum in the direction of the $x$-axis is achieved. This is the general approach outlined in \cite[\textsection 5.2]{majumdar}.

\subsection{The second moment of the maximum}\label{ssec:EM2}
In this subsection, we compute $\E [M_\si^2]$.

\begin{proposition}\label{prop:M2}
	For $0\le\sigma\le1$,
	\begin{equation}\label{eq:EM2}
		\E [M_\sigma^2] =\frac{1+\sigma^2}{2}.
	\end{equation}
\end{proposition}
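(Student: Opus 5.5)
Since $M_\sigma\ge 0$ (as $\gproc x \si 0=0$), I would use the tail formula $\E[M_\sigma^2]=\int_0^\infty 2m\,\P(M_\sigma>m)\D m$ together with Proposition~\ref{prop:cdf}. This is the same split as in Theorem~\ref{thm:mean-max}:
\begin{equation}
	\E[M_\sigma^2]=\int_0^\infty 2m\ob{1-\Phi\ob{\frac m\si}}\D m+\int_0^\infty 2m\,e^{-2(1-\si^2)m^2}\Phi\ob{\frac{(1-2\si^2)m}{\si}}\D m \eqcolon J_1(\si)+J_2(\si).
\end{equation}
The first term is the second moment of the positive part of $\si N$. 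Substituting $u=m/\si$ gives $J_1(\si)=\si^2\int_0^\infty 2u(1-\Phi(u))\D u=\si^2\,\E[N^2\mathbf 1_{N>0}]=\si^2/2$.

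For $J_2$, with $a=2(1-\si^2)>0$ (first assuming $\si<1$) and $b=(1-2\si^2)/\si$, I would integrate by parts using $\frac{\D}{\D m}\big(-e^{-am^2}/a\big)=2m e^{-am^2}$. This gives
\begin{equation}
	J_2=\frac{\Phi(0)}{a}+\frac{b}{a}\int_0^\infty e^{-am^2}\varphi(bm)\D m=\frac1{2a}+\frac{b}{a}\cdot\frac{1}{\sqrt{2\pi}}\cdot\frac{\sqrt\pi}{2\sqrt{a+b^2/2}}.
\end{equation}
Next I would simplify $a+b^2/2=\frac{4\si^2(1-\si^2)+(1-2\si^2)^2}{2\si^2}=\frac{1}{2\si^2}$. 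The second term then equals $\frac{b\si}{2a}=\frac{1-2\si^2}{4(1-\si^2)}$. Hence
\begin{equation}
	J_2=\frac{1}{4(1-\si^2)}+\frac{1-2\si^2}{4(1-\si^2)}=\frac{2-2\si^2}{4(1-\si^2)}=\frac12 ,
\end{equation}
and so $\E[M_\si^2]=\frac{1+\si^2}{2}$ for $0<\si<1$. The case $\si=0$ follows directly from \eqref{eq:cdf0}, since $\int_0^\infty 2m e^{-2m^2}\D m=\tfrac12$.

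The main obstacle is the endpoint $\si=1$, where $a=0$ and the computation above degenerates. Theorem~\ref{thm:mean-max} refers to this proposition to get uniform integrability, so I must not argue by continuity here. Instead I would compute $\si=1$ directly from \eqref{eq:cdf}, which becomes $\P(M_1\le m)=\Phi(m)-\Phi(-m)$. This is the reflection principle: $M_1\jpod|N|$, so $\E[M_1^2]=1$, in agreement with the formula. Along the way I would check that the sign of $b$ causes no issue, since the Gaussian integral depends only on $b^2$ and $b$ appears linearly in front. Finally, the bound $\E[M_\si^2]\le 1$ supplies the uniform second-moment bound that was promised earlier.
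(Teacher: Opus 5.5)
Your proof is correct and follows the paper's argument: the same tail formula and the same split into a term equal to $\sigma^2/2$ and a Gaussian integral equal to $1/2$. For the latter you derive by integration by parts (using $a+b^2/2=1/(2\sigma^2)$) the table formula that the paper cites from Owen. The one deviation is at $\sigma=1$, where you use $M_1\jpod|N|$ directly in place of the paper's dominated-convergence remark; this is a clean way to keep the later uniform-integrability step in Theorem~\ref{thm:mean-max} free of circularity.
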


\begin{proof}
	Recalling Proposition~\ref{prop:cdf}, the proof is analogous
	to that of Theorem~\ref{thm:mean-max}. We have that
	\begin{align}
		\E [M_\si^2] & = 2\int_0^\infty m\P(M_\si > m) \D m                        \\
		             & = 2 \int_0^\infty m \ob{1 - \Phi\ob{\frac m\si}} \D m
		+ 2 \int_0^\infty m e^{-2(1-\si^2)m^2} \Phi\ob{\frac{(1-2\si^2)m}\si} \D m \\& \eqcolon I_1(\sigma) + I_2(\sigma).
	\end{align}

	\noindent	By using the substitution  $u = m/\si$ and integration by parts formula, we conclude that
	\begin{align}
		I_1(\si) & = 2 \si^2 \int_0^\infty u(1 - \Phi(u)) \D u                   =  {\si^2} \int_0^\infty u^2 \varphi(u) \D u = \frac {\si^2}2.
	\end{align}
	To evaluate $I_2(\si)$, we use~\cite[1,011.2]{owen}
	which immediately implies
	\begin{equation}\label{eq:Kab}
		\int_0^\infty x e^{-a x^2}\Phi(bx)\D x = \frac{1}{4a} + \frac{b}{4a\sqrt{2a+b^2}}.
	\end{equation}
	Setting
	\begin{equation}
		a =2(1-\sigma^2),
		\qquad
		b=\frac{1-2\sigma^2}{\sigma},
	\end{equation}
	we conclude (for \( a=0 \), note integrability for \( b < 0 \) and use a dominated convergence argument)
	$
		I_2(\si) = 1/2,
	$
	arriving at~\eqref{eq:EM2}.

\end{proof}

\subsection{The distribution of the maximizing time}
Let $y_\sigma$ be the $y$-coordinate process of
$\iprocs \si$, i.e.
\begin{equation}
	\gproc y \si t = \skp {\iproc \si t} e_0^\perp,
\end{equation}
and let $\tau_\si$ be the maximizing time of $\gprocs x \si$ (it is a.s.\ unique---see Lemma~\ref{lma:uniqueargmax}), i.e.
\begin{equation}
	\tau_\si = \argmax\limits_{0\le t\le1} \gproc x \si t.
\end{equation}
Then, continuing on our previous discussion, we can write
$M'_\si(0) = \gproc y \si {\tau_\si}$.
In order to compute $\E[M'_\si(0)^2]$, our strategy is to  first condition on the time $\tau_\si$ (noting that $\gprocs x \si$ and $\gprocs y \si$ are independent),
and then compute explicitly the density function of $\tau_\si$.
First, we compute the density of
\begin{equation}
	\tau \nderi y  = \argmax\limits_{0\le t\le1} B_t^{0 \to y},
\end{equation}
where $B^{0 \to y}$ is a one-dimensional $0$-to-$y$ Brownian bridge. To do so, we start with a different perspective. The time
$\tau \nderi y$ can be viewed as the maximizing time $\tau_W$ of a one-dimensional Brownian motion $W$
conditionally on $W(1) = y$. From now on we denote the density of a variable $X$ as $f_X$.
The joint density of $(\tau_W, W(1))$ is given by~\cite[eq.~(2.12)]{riedel} and it can be written as
\begin{equation}
	f_{(\tau_W, W(1))}(t, y) = 2\varphi(y) \ug{
		y \sqrt{\frac t {1-t}} \varphi\ob{{y \sqrt{\frac t{1-t}}}} + (1-y^2)\ol \Phi\ob{y \sqrt{\frac t{1-t}}}
	},\qquad y\ge0,
\end{equation}
where $\ol \Phi = 1 - \Phi$. The density of $\tau \nderi y$
is then rather simple to obtain from
\begin{equation}
	f_{\tau \nderi y} = \frac{f_{(\tau_W, W(1))}}{f_{W(1)}},
\end{equation}
as $f_{W(1)} \equiv \varphi$. When $y < 0$, consider the reversed process
$$t \mapsto y - B^{0 \to y}_{1-t},$$ which is
a one-dimensional Brownian bridge from $0$ to $-y$.
Consequently, $\tau \nderi y \jpod 1 -\tau \nderi{-y}$ and
$f_{\tau \nderi y}(t) = f_{\tau \nderi {-y}}(1-t)$. We have proven the
following proposition.

\begin{proposition}\label{prop:denstauy}
	The density of the maximizing time $\tau \nderi y$ for a fixed
	$y \ge 0$ is given by
	\begin{equation}\label{eq:denstauy}
		f_{\tau \nderi y}(t) = 2 \ug{
			y \sqrt{\frac t {1-t}} \varphi\ob{{y \sqrt{\frac t{1-t}}}} + (1-y^2)\ol \Phi\ob{y \sqrt{\frac t{1-t}}}
		},
	\end{equation}
	and when $y < 0$  is given by
	$f_{\tau \nderi y}(t) = f_{\tau \nderi {-y}}(1-t)$.
\end{proposition}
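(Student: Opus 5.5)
The plan is to realize $\tau^{(y)}$ as the maximizing time $\tau_W$ of a standard one-dimensional Brownian motion $W$ on $[0,1]$, conditioned on $W(1)=y$. The conditional law of $W$ given $W(1)=y$ is exactly the $0$-to-$y$ bridge $B^{0\to y}$, and the argmax is a measurable functional of the path. Hence, for almost every $y$, the conditional density is
\[
f_{\tau^{(y)}}(t)=\frac{f_{(\tau_W,W(1))}(t,y)}{f_{W(1)}(y)},\qquad f_{W(1)}=\varphi .
\]
Substituting Riedel's joint density \cite[eq.~(2.12)]{riedel} for $y\ge0$, the factor $2\varphi(y)$ cancels against $\varphi(y)$ and yields \eqref{eq:denstauy} directly.

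To make this rigorous, I would check that the right-hand side is a version of the regular conditional distribution of $\tau_W$ given $W(1)$. Both sides are continuous in $y$, since the bridge $B^{0\to y}=B^{(1)}+ty$ depends continuously on $y$ and the argmax is a.s.\ unique. Therefore the a.e.\ identity upgrades to every $y\ge0$. As a sanity check, I would also confirm that the expression integrates to $1$ in $t$; at $y=0$ it reduces to the constant $2\overline\Phi(0)=1$, i.e.\ the uniform law of the bridge argmax.

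For $y<0$, I would use time reversal and reflection. The process $t\mapsto y-B^{0\to y}_{1-t}$ is again Gaussian, starts at $0$ and ends at $-y$. Its covariance coincides with that of the bridge, because $\min\{1-s,1-t\}-(1-s)(1-t)=\min\{s,t\}-st$. Hence it is a $0$-to-$(-y)$ bridge. Its maximum is attained exactly when $B^{0\to y}_{1-t}$ attains its minimum, so this map relates the argmin of one bridge to the argmax of the other.

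This reflection step is the main obstacle. Under $t\mapsto y-B_{1-t}$, an argmax of $B$ at time $s$ becomes an argmin of the new process at time $1-s$, not an argmax, so the symmetry must be chosen carefully. I would first try the pure time reversal $t\mapsto B^{0\to y}_{1-t}-y$, which preserves maxima and is a $0$-to-$(-y)$ bridge shifted appropriately. Concretely, $B^{0\to y}_{1-t}$ is a $y$-to-$0$ bridge, so $B^{0\to y}_{1-t}-y$ is a $0$-to-$(-y)$ bridge. Its argmax is $1-\tau^{(y)}$, which gives $\tau^{(y)}\jpod 1-\tau^{(-y)}$ and hence $f_{\tau^{(y)}}(t)=f_{\tau^{(-y)}}(1-t)$.
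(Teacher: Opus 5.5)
\textbf{The gap is in the $y\ge 0$ step.} Your overall route is the paper's: condition $W$ on $W(1)=y$, divide the joint density of $(\tau_W,W(1))$ by $\varphi$, and use time reversal for $y<0$. But the step ``substituting Riedel's joint density yields \eqref{eq:denstauy} directly'' cannot succeed, because \eqref{eq:denstauy} as printed is not a probability density.

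To see this, let $t\downarrow 0$. The argument $y\sqrt{t/(1-t)}$ tends to $0$, so the right-hand side tends to $2(1-y^2)\overline{\Phi}(0)=1-y^2$, which is negative for $y>1$. For $y>0$ the true density must tend to $0$ there, since the maximum is at least $y$ and would have to be reached by time $t$.

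Your normalization check would have caught this for any $y>0$. The $t$-integral of the printed expression equals $1-4y^3\int_0^\infty \frac{x^2}{1+x^2}\varphi(yx)\D x<1$, and it tends to $-1$ as $y\to\infty$. You only ran the check at $y=0$, where the misprint is invisible.

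Instead of quoting the formula, derive it from the trivariate density $f_{(W(1),\tau_W,M_W)}$ recalled in Section~\ref{sec:multiple_processes} by integrating over $m\ge y$:
\begin{itemize}
\item Complete the square: $\frac{m^2}{2t}+\frac{(m-y)^2}{2(1-t)}=\frac{(m-ty)^2}{2t(1-t)}+\frac{y^2}{2}$.
\item Substitute $m=ty+\sqrt{t(1-t)}\,z$. This turns $m\ge y$ into $z\ge c:=y\sqrt{(1-t)/t}$, and $m(m-y)$ into $t(1-t)(z^2-y^2)+(2t-1)y\sqrt{t(1-t)}\,z$.
\item Use $\int_c^\infty z^2\varphi=c\varphi(c)+\overline{\Phi}(c)$ and $\int_c^\infty z\varphi=\varphi(c)$, then divide by $\varphi(y)$.
\end{itemize}
This gives
\[
f_{\tau^{(y)}}(t)=2\Big[\,y\sqrt{\tfrac{t}{1-t}}\;\varphi\Big(y\sqrt{\tfrac{1-t}{t}}\Big)+(1-y^2)\,\overline{\Phi}\Big(y\sqrt{\tfrac{1-t}{t}}\Big)\Big],\qquad y\ge 0 .
\]
So the arguments of $\varphi$ and $\overline{\Phi}$ must be $y\sqrt{(1-t)/t}$, not $y\sqrt{t/(1-t)}$; the prefactor stays $y\sqrt{t/(1-t)}$. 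This corrected form is exactly the one used later in \eqref{eq:Asigma-expanded} (prefactor $yb_t$, argument $a_t y$), so the downstream results are unaffected. What you cannot do is take the printed formula on trust.

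\textbf{The $y<0$ step.} Your final map $t\mapsto B^{0\to y}_{1-t}-y$ is correct, and it actually repairs the paper's argument. The map $t\mapsto y-B^{0\to y}_{1-t}$, which the paper uses and you first wrote down, ends at $y-B^{0\to y}_0=y$, not at $-y$. It also turns the argmax into an argmin, so it does not yield $\tau^{(y)}\jpod 1-\tau^{(-y)}$. Pure time reversal, together with the covariance identity you checked and the mean $-ty$, does. Your continuity-in-$y$ argument, upgrading the identity from almost every $y$ to every $y$, is a sound addition that the paper omits.
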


We now compute the density of $\tau_\si$.
Clearly, as $Y$ has density $\varphi(y/\sigma)/\si$
for $\si > 0$ (note that $\si=0$ makes a trivial case as $f_{\tau_0} = f_{\tau \nderi 0}$),
\begin{align}
	f_{\tau_\si}(t) & = \int_0^\infty \frac 1 \sigma f_{\tau \nderi y}(t) \varphi(y/\si) \D y +
	\int_0^\infty \frac 1\si f_{\tau \nderi {y}}(1-t) \varphi(t/\si) \D y                       \\
	                & \eqcolon A_\si(t) + A_\si(1-t).
\end{align}

\noindent To compute $A_\sigma(t)$, fix $0<t<1$ and set
\begin{equation}\label{eq:def-ab}
	a_t=\sqrt{\frac{1-t}{t}}, \qquad
	b_t=\sqrt{\frac{t}{1-t}}=\frac{1}{a_t}.
\end{equation}
Using Proposition \ref{prop:denstauy}, we get
\begin{equation}\label{eq:Asigma-expanded}
	A_\sigma(t)
	= \frac{2}{\sigma}\int_0^\infty \ug{y b_t\,\varphi(a_t y)
		+
		(1-y^2)\ol \Phi (a_t y) }
	\varphi \ob{\frac{y}{\sigma}} \D y.
\end{equation}
The problem reduces to evaluating three Gaussian integrals, namely for $k > 0$ we need to compute
\begin{equation}
	\begin{aligned}\label{eq:I0I1I2}
		I_0(k) & =\int_0^\infty \varphi(x)\,\ol \Phi(kx) \D x,
		\\
		I_1(k) & =\int_0^\infty x \varphi(x)\varphi(kx)\D x,
		\\
		I_2(k) & =\int_0^\infty x^2\varphi(x)\ol \Phi (kx)\D x.
	\end{aligned}
\end{equation}
Using \cite[\textsection 4.3, eq.~2]{ng_geller}, we have
\begin{equation}
	I_0(k) = \frac{1}{4} - \frac{1}{2\pi}\arctan k.
\end{equation}
For $I_1(k)$, we directly use the antiderivative and obtain
\begin{equation}
	I_1(k) = \frac{1}{2\pi(1 + k^2)}.
\end{equation}
Finally, since
\begin{equation}
	x^2 \varphi(x) = \varphi(x) - \frac{\D}{\D x}(x \varphi(x)),
\end{equation}
we get
\begin{equation}
	I_2(k) = I_0(k) - kI_1(k) = \frac{1}{4} - \frac{1}{2\pi}\arctan k - \frac{k}{2\pi(1 + k^2)}.
\end{equation}

Set now $y=\sigma x$ in \eqref{eq:Asigma-expanded}.
Since $\varphi(y/\si)/\si \D y=\varphi(x)\D x$,
we obtain
\begin{align}
	A_\sigma(t)
	 & =
	2\int_0^\infty
	\ug{
		(\sigma x)b_t \varphi(\sigma a_t x)
		+
		(1-\sigma^2x^2)\ol \Phi (\sigma a_t x)
	}
	\varphi (x)\D x \\
	 & =
	2\ug{
		\sigma b_t I_1(\sigma a_t)
		+
		I_0(\sigma a_t)
		-
		\sigma^2 I_2(\sigma a_t)
	}.
	\label{eq:Asigma-I}
\end{align}
Using \eqref{eq:I0I1I2}, we conclude the following.

\begin{proposition}
	For $0 < t < 1$ and $0 < \sigma \le 1$, the quantity $A_\si(t)$ is given by
	\begin{equation}\label{eq:Asigma-closed}
		A_\sigma(t)
		=
		\frac{\sigma}{\pi}\sqrt{\frac{t}{1-t}}
		+
		(1-\sigma^2)\ug{
			\frac12-\frac{1}{\pi}
			\arctan \ob{\sigma\sqrt{\frac{1-t}{t}} }
		}.
	\end{equation}
\end{proposition}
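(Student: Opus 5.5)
The plan is to start from the representation \eqref{eq:Asigma-I},
\[
A_\sigma(t)=2\ug{\sigma b_t I_1(k)+I_0(k)-\sigma^2 I_2(k)},\qquad k=\sigma a_t,
\]
and substitute the closed forms of $I_0$, $I_1$, $I_2$ obtained above. Before substituting anything explicit, I will use the identity $I_2(k)=I_0(k)-kI_1(k)$ to eliminate $I_2$. This regroups the bracket as
\[
A_\sigma(t)=2\ug{(1-\sigma^2)I_0(k)+\ob{\sigma b_t+\sigma^2 k}I_1(k)}
=2\ug{(1-\sigma^2)I_0(k)+\ob{\sigma b_t+\sigma^3 a_t}I_1(k)}.
\]
The point of this regrouping is that the $\arctan$ contributions of $I_0$ and $I_2$ merge into a single term with coefficient $1-\sigma^2$. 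The rational term $-k/(2\pi(1+k^2))$ hidden in $I_2$ is absorbed into the $I_1$ coefficient.

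The key simplification is in the coefficient of $I_1$. Since $a_tb_t=1$ by \eqref{eq:def-ab}, we have $\sigma^3a_t=\sigma^3a_t^2b_t$, and therefore
\[
\sigma b_t+\sigma^3 a_t=\sigma b_t\ob{1+\sigma^2a_t^2}=\sigma b_t(1+k^2).
\]
This factor cancels exactly against the denominator of $I_1(k)=1/(2\pi(1+k^2))$. Hence
\[
2\ob{\sigma b_t+\sigma^3 a_t}I_1(k)=\frac{\sigma b_t}{\pi}=\frac\sigma\pi\sqrt{\frac t{1-t}}.
\]

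The remaining term is
\[
2(1-\sigma^2)I_0(k)=(1-\sigma^2)\ug{\tfrac12-\tfrac1\pi\arctan\ob{\sigma\sqrt{(1-t)/t}}}.
\]
Adding the two pieces gives \eqref{eq:Asigma-closed}.

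The only genuine content, and hence the main point to check, is the legitimacy of \eqref{eq:Asigma-I} together with the formulas for $I_0$, $I_1$, $I_2$ at $k=\sigma a_t>0$. For fixed $0<t<1$ and $\sigma>0$ we have $k\in(0,\infty)$, and all three integrands are dominated by polynomial multiples of $\varphi$. Therefore the change of variables $y=\sigma x$ and the splitting of the integral in \eqref{eq:Asigma-expanded} are justified, and no boundary issues arise. Beyond this, the argument is pure algebra. The step most likely to go wrong is bookkeeping of signs when substituting $I_2=I_0-kI_1$, and the cancellation via $a_tb_t=1$ serves as a built-in check.
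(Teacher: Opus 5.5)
Your proof is correct and follows the paper's route: the paper substitutes the closed forms of $I_0$, $I_1$, $I_2$ into \eqref{eq:Asigma-I}, and your regrouping via $I_2=I_0-kI_1$ together with the cancellation $\sigma b_t+\sigma^3 a_t=\sigma b_t(1+k^2)$ is that algebra written out. If you go back to verify \eqref{eq:Asigma-I} itself, note that it requires the density of $\tau^{(y)}$ with argument $a_t y$ (not $b_t y$) inside $\varphi$ and $\ol{\Phi}$, as written in \eqref{eq:Asigma-expanded}; that is the correct form, although Proposition~\ref{prop:denstauy} prints $y\sqrt{t/(1-t)}$ there.
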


We can now evaluate
$f_{\tau_\si}(t) = A_\si(t) + A_\si(1-t)$, using, for the arctangents, the additive formula
\begin{equation}
	\arctan u + \arctan v = \arctan \ob{\frac{u+v}{1-uv}},
\end{equation}
where in our case $uv = \si^2 \le 1$ satisfies the condition. For $uv=1$ we use the conventions $\cdot/0 = \infty$ and $\arctan \infty = \pi/2$---this is why the formula~\eqref{eq:fsigma-closed} will be valid even for $\sigma=1$.
Finally, we obtain the following.
\begin{theorem}\label{thm:fsigma}
	For $0<\sigma\le 1$, the maximizing time $\tau_\sigma$ has density
	\begin{equation}\label{eq:fsigma-closed}
		f_{\tau_\si}(t)
		=
		\frac{\sigma}{\pi\sqrt{t(1-t)}}
		+
		(1-\sigma^2)\ug{
			1-\frac{1}{\pi}
			\arctan \ob{
				\frac{\sigma}{(1-\sigma^2)\sqrt{t(1-t)}}
			}
		},
		\qquad 0<t<1.
	\end{equation}
\end{theorem}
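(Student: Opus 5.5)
The plan is to add the closed form \eqref{eq:Asigma-closed} at $t$ and at $1-t$, using $f_{\tau_\si}(t)=A_\si(t)+A_\si(1-t)$, which follows from conditioning on $Y$ together with Proposition~\ref{prop:denstauy}.

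\emph{Square-root terms.} These give
\begin{equation}
	\frac{\si}{\pi}\ob{\sqrt{\frac{t}{1-t}}+\sqrt{\frac{1-t}{t}}}=\frac{\si}{\pi}\cdot\frac{t+(1-t)}{\sqrt{t(1-t)}}=\frac{\si}{\pi\sqrt{t(1-t)}},
\end{equation}
which is the first term of \eqref{eq:fsigma-closed}.

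\emph{Constant and arctangent terms.} The constants contribute $(1-\si^2)\cdot\ob{\frac12+\frac12}=1-\si^2$. It remains to show
\begin{equation}
	\arctan u+\arctan v=\arctan\ob{\frac{\si}{(1-\si^2)\sqrt{t(1-t)}}},
	\qquad u=\si\sqrt{\frac{1-t}{t}},\quad v=\si\sqrt{\frac{t}{1-t}}.
\end{equation}
Here $u,v>0$ and $uv=\si^2$. If $\si<1$, then $uv<1$, so the addition formula applies on the principal branch and the sum lies in $(0,\pi/2)$. The numerator is $u+v=\si/\sqrt{t(1-t)}$, as in the previous step, and the denominator is $1-uv=1-\si^2$, which gives the claimed argument.

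\emph{The case $\si=1$.} Here $uv=1$, so $\arctan u+\arctan(1/u)=\pi/2$ for $u>0$. This agrees with the convention $\arctan\infty=\pi/2$. Moreover, the factor $(1-\si^2)$ vanishes, so the density reduces to the arcsine law $1/(\pi\sqrt{t(1-t)})$. This is consistent with L\'evy's result for Brownian motion.

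The only genuinely delicate step is justifying \eqref{eq:Asigma-closed} from \eqref{eq:Asigma-I}, which I take as given. One should still double-check the algebra there: $2\si b_t I_1(\si a_t)=\si b_t/(\pi(1+\si^2a_t^2))$, while the $I_2$ term contributes $\si^3 a_t/(\pi(1+\si^2a_t^2))$. Combining these with the arctangent terms yields the stated form. I would also check that the second integral in the definition of $f_{\tau_\si}$ should read $\varphi(y/\si)$ rather than $\varphi(t/\si)$, which is evidently a typo. As a final sanity check, I would verify that $\int_0^1 f_{\tau_\si}=1$, or at least that $\si=0$ recovers the uniform density of the argmax of a Brownian bridge, which it does.
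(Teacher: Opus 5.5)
Your proposal is correct and follows essentially the paper's route. You sum the closed form of $A_\sigma$ at $t$ and $1-t$, merge the arctangents via the addition formula with $uv=\sigma^2\le 1$, and handle $\sigma=1$ by the convention $\arctan\infty=\pi/2$ (or, as you note, because the factor $1-\sigma^2$ vanishes); your side checks are also right, including the algebra behind the closed form of $A_\sigma$ and the typo $\varphi(t/\sigma)$ for $\varphi(y/\sigma)$.
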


\begin{remark}
	Considering $\si=0,1$ we once again reconstruct classical results:
	\begin{equation}
		f_{
		\tau_0
		}(t) = 1, \quad f_{\tau_1}(t) = \frac1{\pi \sqrt{t(1-t)}}, \qquad
		0 < t < 1,
	\end{equation}
	i.e. for a Brownian bridge the distribution is uniform (see e.g.~\cite[IV.24]{borodin_salminen}).
	In fact, the result for the Brownian bridge was already derived as a special case in
	Proposition~\ref{prop:denstauy}, but we can also see that it is the limit
	of $f_{\tau_\si}$ as $\si \downarrow 0$.
	For Brownian motion, the distribution is the famous arcsine law (see e.g.\ \cite[Thm.~5.26]{morters_peres}).
\end{remark}

\subsection{The derivative component and final formula}
We now move to  $\E [M'_\si(0)^2]$. Recall that $M'_\si(0) = \gproc y \si {\tau_\si}$.
When $\tau_\si$ is replaced by a fixed $t$, from~\eqref{eq:covariance} it follows that
\[ \E [\gproc y \si t^2] = t - (1 - \si^2) t^2.\]
By
conditioning on $\tau_\si$,
\begin{align}\label{eq:EMderiv21}
	\E [M'_\si(0)^2]= \int_0^1 (t - (1-\si^2)t^2)  f_{\tau_\si} (t) \D t.
\end{align}

\begin{proposition}\label{prop:Mprime2}
	For $0\le\sigma\le1$,
	\begin{equation}\label{eq:EMderiv2}
		\E [M_\sigma'(0)^2]
		=
		\frac{3\sigma^3+\sigma^2+\sigma+1}{6(\sigma+1)}.
	\end{equation}
\end{proposition}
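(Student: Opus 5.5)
The plan is to evaluate the integral in~\eqref{eq:EMderiv21} directly from the closed form of Theorem~\ref{thm:fsigma}, after first using symmetry to reduce the work. By construction $f_{\tau_\si}(t)=A_\si(t)+A_\si(1-t)$ is symmetric about $t=1/2$, so $\E[\tau_\si]=1/2$. Writing $t^2=t-t(1-t)$ and setting $c=1-\si^2$, we get
\begin{equation}
	\E [M'_\si(0)^2]=\frac12-c\ob{\frac12-\E[\tau_\si(1-\tau_\si)]}=\frac{\si^2}{2}+c\,\E[\tau_\si(1-\tau_\si)].
\end{equation}
This reduces everything to the single moment $\E[\tau_\si(1-\tau_\si)]$.

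To compute this moment, split~\eqref{eq:fsigma-closed} into three pieces and write $s=\sqrt{t(1-t)}$. The arcsine-type piece contributes $\frac{\si}{\pi}\int_0^1 s\D t=\frac{\si}{8}$. The constant piece contributes $c\int_0^1 t(1-t)\D t=\frac c6$. The remaining piece is
\begin{equation}
	-\frac{c}{\pi}\int_0^1 t(1-t)\arctan\ob{\frac{\si}{c\sqrt{t(1-t)}}}\D t .
\end{equation}
It is easier to handle this through the unsymmetrized form~\eqref{eq:Asigma-closed}. Since $t(1-t)$ is symmetric, $\E[\tau_\si(1-\tau_\si)]=2\int_0^1 t(1-t)A_\si(t)\D t$, and the only nonelementary term is then $\int_0^1 t(1-t)\arctan\ob{\si\sqrt{(1-t)/t}}\D t$.

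The main obstacle is this arctangent integral. I would substitute $t=1/(1+u^2)$, so that $\sqrt{(1-t)/t}=u$ and the integrand becomes $\arctan(\si u)$ times a rational function of $u$. I would then integrate by parts, differentiating $\arctan(\si u)$ to get $\si/(1+\si^2u^2)$, which leaves a rational integral in $u$. That integral should be computable by partial fractions in $1+u^2$ and $1+\si^2u^2$. Terms like $(1+\si)^{-1}$ (equivalently $(1+\si)^{-2}$ before simplification) are what should produce the factor $\si+1$ in the denominator of~\eqref{eq:EMderiv2}.

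If the integration by parts turns out to be awkward, the fallback is to bypass the closed density altogether. In that approach one returns to $A_\si(t)=\frac1\si\int_0^\infty f_{\tau^{(y)}}(t)\varphi(y/\si)\D y$, computes $\int_0^1 t(1-t)f_{\tau^{(y)}}(t)\D t$ for fixed $y$ first, and then averages the result against the Gaussian in $y$ using the integrals $I_0,I_1,I_2$.

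Finally, I would assemble the pieces, simplify to $\frac{3\si^3+\si^2+\si+1}{6(\si+1)}$, and check the endpoint values. At $\si=0$ this should give $1/6$, which matches $\int_0^1(t-t^2)\D t$ for the uniform law. At $\si=1$ it should give $1/2$, which matches $\E[\tau_1]$ under the arcsine law. This confirms the formula on the closed interval, since the case $\si=0$ is covered by $f_{\tau_0}\equiv1$.
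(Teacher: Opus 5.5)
Your proposal is correct, and its structure matches the paper's. The paper likewise writes $\E[M'_\si(0)^2]=\si^2\E[\tau_\si]+2(1-\si^2)\int_0^1 t(1-t)A_\si(t)\D t$ and uses $\E[\tau_\si]=1/2$ by symmetry. It also evaluates the two elementary pieces, $1/6$ and $\up B(5/2,3/2)=\pi/16$; the latter gives your $\si/8$ after the factor $2\si/\pi$.

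The one real difference is how you handle $I(\si)=2\int_0^\infty u^3(1+u^2)^{-4}\arctan(\si u)\D u$ (Lemma~\ref{lem:a2}). The paper differentiates in $\si$ and expands $x^4=((1+x^2)-1)^2$. It then computes $J_2,J_3,J_4$ by a recursion through Wallis' integrals, and integrates $I'(\si)$ back from $0$ by partial fractions.

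Your integration by parts goes through cleanly. Take the antiderivative $-\frac{1}{2(1+u^2)^2}+\frac{1}{3(1+u^2)^3}$ of $2u^3(1+u^2)^{-4}$, which vanishes at infinity. Since $\arctan(0)=0$, both boundary terms drop, and in the paper's notation you get $I(\si)=\si\ob{\tfrac12 J_2-\tfrac13 J_3}$. With $J_2=\frac{\pi(1+2\si)}{4(1+\si)^2}$ and $J_3=\frac{\pi(3+9\si+8\si^2)}{16(1+\si)^3}$, this gives $\frac{\pi\si(4\si^2+9\si+3)}{48(1+\si)^3}$ directly.

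Your route buys three things. It never needs $J_4$, it skips the second integration in $\si$, and it avoids justifying differentiation under the integral sign. The partial fractions degenerate at $\si=1$, but that does not matter, because this term is multiplied by $1-\si^2$.

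Putting the pieces together, $\E[\tau_\si(1-\tau_\si)]=\frac\si8+\frac{1-\si^2}6-\frac{(1-\si^2)\si(4\si^2+9\si+3)}{24(1+\si)^3}$. Substituting into $\frac{\si^2}{2}+(1-\si^2)\E[\tau_\si(1-\tau_\si)]$ does simplify to~\eqref{eq:EMderiv2}.
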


\begin{proof}
	Rearranging terms in~\eqref{eq:EMderiv21}, for $0 < \sigma \le 1$,
	\begin{align}\label{eq:EMderiv22}
		\E [M'_\si(0)^2] & = \int_0^1 (t - (1-\si^2)t^2)  f_{\tau_\si} (t) \D t                   \\
		                 & = \si^2 \int_0^1 t f_{\tau_\si}(t) \D t +
		(1-\si^2) \int_0^1 t(1-t) f_{\tau_\si}(t) \D t                                            \\
		                 & = \si^2 \E \tau_\si + 2 (1 - \si^2 ) \int_0^1 t(1-t) A_\sigma(t) \D t.
	\end{align}
	Due to the symmetry of $f_{\tau_\si}$ around $1/2$, it is easy to
	see that $\E [\tau_\si] = 1/2$. The second part can be futher separated into three integrals.
	The first two are elementary:
	\[
		\int_0^1 t(1-t) \D t=\frac16,
		\qquad
		\int_0^1 t^{3/2}(1-t)^{1/2} \D t= \up B \ob{\frac 52, \frac 32} = \frac{\pi}{16},
	\]
	while the final
	\[
		\int_0^1
		t(1-t)
		\arctan \ob{\sigma\sqrt{\frac{1-t}{t}}} \D t
		=
		\frac{\pi\sigma(4\sigma^2+9\sigma+3)}{48(1+\sigma)^3}
	\]
	is computed in Lemma~\ref{lem:a2}. Combining these results we arrive at~\eqref{eq:EMderiv2}.
	The remaining case $\si =0$ is trivial.
\end{proof}

With~\eqref{eq:reduxformule}, \eqref{eq:EM2} and~\eqref{eq:EMderiv2},
we have arrived at the desired formula for the expected area of
$K_\si$.

\begin{corollary}
	For $0 \le \si \le 1$,
	\begin{equation}\label{eq:Earea}
		\E [\area K_\si] = \frac \pi 3 \cdot \frac{\si^2 + \si + 1}{\si + 1}.
	\end{equation}
\end{corollary}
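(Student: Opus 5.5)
The plan is to substitute the two moment formulas into the reduced Cauchy area formula~\eqref{eq:reduxformule},
\[
\E [\area K_\si] = \pi \ob{ \E [M_\si(0)^2] - \E [M'_\si(0)^2]},
\]
and simplify. Recall that $M_\si(0)=M_\si$ is the maximum of the projected process $\gprocs x \si$. Hence Proposition~\ref{prop:M2} gives $\E [M_\si(0)^2]=(1+\si^2)/2$. Proposition~\ref{prop:Mprime2} gives $\E[M'_\si(0)^2]=(3\si^3+\si^2+\si+1)/(6(\si+1))$.

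Before substituting, I would note why~\eqref{eq:reduxformule} may be used for the area. The support function $M_\si(\theta)$ must be differentiable, so that $M'_\si(\theta)$ makes sense. Almost surely this holds at a fixed $\theta$, since the argmax of $\gprocs x \si$ is a.s.\ unique (Lemma~\ref{lma:uniqueargmax}). Then $M'_\si(\theta)$ is the coordinate of the maximizer orthogonal to $e_\theta$. Isotropy makes the law of $(M_\si(\theta),M'_\si(\theta))$ independent of $\theta$. Fubini then reduces the $\theta$-integral to $2\pi$ times the value at $\theta=0$, and the moments are finite by Proposition~\ref{prop:M2} and the bound $|M'_\si(0)|\le\sup_t|\gproc y \si t|$. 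These points are already built into~\eqref{eq:reduxformule}, so I would only cite them.

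The substantive step is the algebra. Put both terms over the common denominator $6(\si+1)$:
\[
\frac{1+\si^2}{2}=\frac{3(1+\si^2)(1+\si)}{6(\si+1)}=\frac{3\si^3+3\si^2+3\si+3}{6(\si+1)}.
\]
Subtracting $3\si^3+\si^2+\si+1$ from the numerator leaves $2\si^2+2\si+2$. Therefore
\[
\E [M_\si(0)^2]-\E [M'_\si(0)^2]=\frac{2(\si^2+\si+1)}{6(\si+1)}=\frac{\si^2+\si+1}{3(\si+1)},
\]
and multiplying by $\pi$ gives~\eqref{eq:Earea}.

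As a check, $\si=0$ gives $\pi/3$ and $\si=1$ gives $\pi/2$. These match the Majumdar et al.\ bridge value and El Bachir's Brownian motion value. Both endpoints are already covered, since Propositions~\ref{prop:M2} and~\ref{prop:Mprime2} hold on the closed interval $0\le\si\le1$. I do not expect a real obstacle; the only care needed is the endpoint bookkeeping and the legitimacy of~\eqref{eq:reduxformule}.
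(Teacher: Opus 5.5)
Your proposal is correct and follows the paper's own argument: the corollary is obtained by substituting Proposition~\ref{prop:M2} and Proposition~\ref{prop:Mprime2} into~\eqref{eq:reduxformule}. Your common-denominator computation $\frac{3(1+\si^2)(1+\si)-(3\si^3+\si^2+\si+1)}{6(\si+1)}=\frac{\si^2+\si+1}{3(\si+1)}$ is right. Your remarks on why~\eqref{eq:reduxformule} may be used (a.s.\ unique maximizer in a fixed direction, isotropy, and Fubini with the bound by $\sup_t|\gproc y \si t|$) are sound and a bit more careful than the paper, which takes that step for granted.
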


\begin{remark}
	Once again we can reconstruct the classical formulas for the Brownian bridge and
	Brownian motion respectively (see \cite{elbachir, majumdar}), i.e.
	\begin{equation}
		\E [\area K_0] = \frac \pi 3, \qquad
		\E [\area K_1] = \frac \pi 2.
	\end{equation}
\end{remark}

\newcommand{\joi}{{\text{joint}}}
\section{Multiple processes}\label{sec:multiple_processes}
In this section, we consider multiple independent Gaussian-endpoint Brownian bridges \( X_1\), \( X_2\), \( \ldots, X_n \)
with parameters \( \si_1, \si_2, \ldots, \si_n \in \cc{0,1} \).
We always use \( j \) in the subscript when referring to the \( j \)-th process and
\enquote{joint} when referring to their joint maximum in the direction of the \( x \)-axis,
the corresponding maximizing time or the joint convex hull.
The processes are jointly isotropic, so we could use the same approach
for the expected perimeter of the joint convex hull, calculating the expected joint maximum
in the $x$-direction. We have that
\begin{align}
	\E [M_\joi] & = \int_0^\infty \P(M_\joi > m) \D m                          = \int_0^\infty 1 - \prod_{j=1}^n \P(M_{\si_j} \le m) \D m.
\end{align}
With~\eqref{eq:cdf} in mind, establishing a closed formula
would mean evaluating integrals of the form
\begin{equation}\label{eq:Habc-intro}
	\int_0^\infty e^{-a_0x^2} \prod_{j=1}^n \Phi(a_j x) \D x.
\end{equation}
These are the so-called orthant probabilities, and it is well known
that no closed forms are known for $n \ge 3$,
aside from special cases. For \( n=2 \), there is a closed formula
given in~\cite[2,010.6]{owen}, and in Subsection~\ref{ssec:perim2}
we present  a closed formula for the expected perimeter
in this case.

For the expected area of the joint convex hull, calculating
\( \E [M_\joi ^2] \) would mean evaluating integrals of the form
\begin{equation}
	\int_0^\infty x e^{-a_0x^2} \prod_{j=1}^n \Phi(a_j x) \D x.
\end{equation}
Integrating by parts this is immediately reduced to
integrals of the kind in eq.~\eqref{eq:Habc-intro}.
Moreover, the \enquote{degree} \( n \) will be reduced by \( 1 \),
making the computation possible even for \( n=3 \).
The derivative component is again more complicated.
If we denote by \( J \) the random index of the process
which achieves the joint maximum, \( M_\joi'(0) = y_J(\tau_J) \).
By conditioning both on the index \( J \)
and the joint maximizing time \( \tau_J = \tau_\joi \),
we can write
\begin{equation}\label{eq:a21}
	\E [M'_\joi(0)^2] = \sum_{j=1}^n \int_0^1 (t - (1 -\si_j^2)t^2) f_{(J, \tau_\joi)}(j, t) \D t,
\end{equation}
where \( f_{(J, \tau_\joi)} \) is the mixed mass-density
function of the vector. It can be calculated by  considering the joint maximum. Namely,
\begin{equation}\label{eq:a22}
	f_{(J, \tau_\joi)}(j, t ) = \int_0^\infty f_{(\tau_{\si_j}, M_{\si_j})}(t,m) \prod_{k \neq j} \P(M_{\si_k} \le m) \D m.
\end{equation}
The reasoning is that the \( j \)-th process has to achieve maximum \( m \) while
all others must remain below it. This general approach was seen in~\cite{majumdar} and~\cite{sebek}.
The density \( f_{(\tau_{\si_j}, M_{\si_j})} \) can, in turn, be obtained by conditioning on the random endpoint
of the projected \(j \)-th process, i.e.
\begin{equation}\label{eq:a23}
	f_{(\tau_{\si_j}, M_{\si_j})}(t,m) = \int_{-\infty}^m f_{(\tau \nderi y, M \nderi y)}(t,m) \cdot \frac 1{\si_j} \varphi\ob{\frac y{\si_j}} \D y.
\end{equation}
Finally, \( f_{(\tau \nderi y, M \nderi y)} \) is obtained by dividing the formula~\cite[eq.\ (2.1)]{riedel} for the joint density of \( (W(1), \tau_W, M_W) \) (respectively the endpoint, maximizing time and maximum of a one-dimensional Brownian motion supported on \( \cc{0,1} \)) with \( \varphi(y) \), once again
considering the \( 0 \)-to-\( y \) bridge as Brownian motion conditionally on \( W(1)=y \). Said lattermost formula is, in our notation,
\begin{equation}
	f_{(W(1), \tau_W, M_W)} (y,t,m) =
	\frac{m(m-y)}{\pi \ug{t(1-t)}^{3/2}}
	\exp \ug{ -\frac{m^2}{2t} - \frac{(m-y)^2}{2(1-t)}  }.
\end{equation}
Ultimately we will see that there appears to be no closed formula even for \( n=2 \). We continue with this question in Subsection~\ref{ssec:area2}

Instead of considering a fixed number \( n \) of processes,
we could consider the asymptotics as \( n \ub \).
This problem was solved in a much more general setting in~\cite{davydov},
and we briefly apply the results in Subsection~\ref{ssec:davydov}.

\subsection{Expected perimeter (two independent processes)}\label{ssec:perim2}
We consider two independent processes \( X_1 \) and \( X_2 \) with
parameters \( \si_1, \si_2 \in \cc{0,1} \). Due to isotropy, the joint convex
hull
\begin{equation}
	K_{\si_1, \si_2} \coloneq K_\joi = \hull \ob{ \vit{ X_1(t) \st 0 \le t \le 1 }  \cup \vit{X_2(t) \st 0 \le t \le 1}}
\end{equation}
has expected perimeter
\begin{align}
	\E [\per K_\joi] =  2\pi \E [M_\joi] & = 2\pi \int_0^\infty \P(M_\joi > m) \D m                                                     \\
	                                     & = 2\pi \int_0^\infty 1 - \P(M_{\si_1} \le m) \P(M_{\si_2} \le m) \D m \label{eq:perim2form}.
\end{align}
Due to~\eqref{eq:cdf}, we see that the product of probabilities turns into a sum of \( 4 \) parts, and likewise
we split the integral into \( 4 \) parts. Three of them are of the form
\begin{equation}\label{eq:Gabc}
	G(a, b,c) = \int_0^\infty e^{-ax^2} \Phi(bx) \Phi(cx) \D x.
\end{equation}
By an application of~\cite[2,010.6]{owen} for all \( a > 0 \) and \( b,c\in\R \),
\begin{equation}\label{eq:Gabcsol}
	\begin{aligned}
		G(a,b,c) = \frac{\sqrt \pi}{8 \sqrt a} + \frac 1 {4\sqrt{\pi a}} \bigg(\! & {} \arctan \frac b{\sqrt{2a}}                                                    \\
		                                                                          & + \arctan \frac c{\sqrt{2a}} + \arctan \frac{bc}{\sqrt{2a(2a+b^2+c^2)}} \bigg) .
	\end{aligned}
\end{equation}
The other integral type which will show up in~\eqref{eq:perim2form} is
\begin{equation}\label{eq:I1bc}
	\begin{aligned}
		\int_0^\infty 1 - \Phi\ob{bx}\Phi\ob{cx} \D x
		 & = b \int_0^\infty x \varphi\ob{bx} \Phi\ob{c x} \D x+
		c \int_0^\infty x \Phi\ob{bx} \varphi\ob{cx} \D  x                              \\
		 & = \frac1{bc \sqrt{8\pi}} \ob{ b + c + \sqrt{b^2 + c^2} } \eqcolon I_1(b,c) ,
	\end{aligned}
\end{equation}
which is solved first by integration by parts (\( \up d v = \up d x \)) and then applying~\eqref{eq:Kab}.
Note that the formula holds, with the convention \( \Phi(\infty) = 1 \), even when \( b=\infty \) or \( c=\infty \).
Thus using~\eqref{eq:cdf} to evaluate~\eqref{eq:perim2form}, we get
\begin{equation}\label{eq:Eperim2proc}
	\begin{aligned}
		\E [M_\joi]
		= & I_1\ob{\frac1{\si_1},\frac1{\si_2}}  + G\ob{2(1-\si_2^2), \frac 1{\si_1}, \frac{1-2\si_2^2}{\si_2}  }                             + G\ob{ 2(1-\si_1^2), \frac 1{\si_2}, \frac{1-2\si_1^2}{\si_1} } \\
		  & - G\ob{ 2(2 - \si_1^2 - \si_2^2), \frac{1 - 2\si_1^2}{\si_1}, \frac{1 - 2\si_2^2}{\si_2} },
	\end{aligned}
\end{equation}
a complicated but closed solution. We have proven the following theorem.

\begin{theorem}\label{tm:Eperim2proc}
	The expected value  \( \E [\per K_{\si_1, \si_2}] \) is obtained by multiplying~\eqref{eq:Eperim2proc} with \( 2\pi \).
\end{theorem}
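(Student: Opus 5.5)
The plan is to reduce the expected perimeter to a one-dimensional computation, expand the product of distribution functions, and evaluate each resulting integral with the closed forms already in hand.

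\textbf{Reduction.} The two processes are independent and each is isotropic, so the pair $(X_1,X_2)$ is jointly isotropic. Hence the support function $M_\joi(\theta)$ of $K_{\si_1,\si_2}$ has a law that does not depend on $\theta$. Cauchy's perimeter formula, combined with Fubini, then gives $\E[\per K_\joi]=2\pi\,\E[M_\joi(0)]$. Here $M_\joi(0)=\max\{M_{\si_1},M_{\si_2}\}$, where $M_{\si_1}$ and $M_{\si_2}$ are independent maxima of the projected one-dimensional processes. The tail-integral formula therefore yields \eqref{eq:perim2form}. It remains to show that this integral equals \eqref{eq:Eperim2proc}.

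\textbf{Expansion.} Write $\P(M_{\si_j}\le m)=\Phi(m/\si_j)-E_j(m)\Phi(b_jm)$, using Proposition~\ref{prop:cdf}, with $E_j(m)=e^{-2(1-\si_j^2)m^2}$ and $b_j=(1-2\si_j^2)/\si_j$. Expanding the product and splitting the integrand of \eqref{eq:perim2form} gives four terms:
\[
\bigl(1-\Phi(m/\si_1)\Phi(m/\si_2)\bigr)+E_2\Phi(m/\si_1)\Phi(b_2m)+E_1\Phi(m/\si_2)\Phi(b_1m)-E_1E_2\Phi(b_1m)\Phi(b_2m).
\]
Each term is integrable on $[0,\infty)$ when $\si_j<1$, so the integral may be split term by term.

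\textbf{Evaluation.} The first term is handled by \eqref{eq:I1bc}. I would integrate by parts with $v=x$; the boundary term vanishes because $1-\Phi(bx)\Phi(cx)$ has Gaussian decay. Then apply \eqref{eq:Kab} with $a=b^2/2$, which gives $I_1(1/\si_1,1/\si_2)$. The last three terms are exactly of the form $G(a,b,c)$, with exponents $a=2(1-\si_2^2)$, $2(1-\si_1^2)$ and $2(2-\si_1^2-\si_2^2)$ respectively. They are evaluated by \eqref{eq:Gabcsol}, taken from Owen's table. Summing the four pieces gives \eqref{eq:Eperim2proc}, and multiplying by $2\pi$ gives the theorem.

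\textbf{Main obstacle: boundary parameter values.} The step needing real care is the boundary values of the parameters. If some $\si_j=1$, then $a=0$ in the corresponding $G$-term and \eqref{eq:Gabcsol} no longer applies directly. If some $\si_j=0$, then $1/\si_j=\infty$ and $b_j=\infty$, and we are in the convention setting of Remark~\ref{re:sing1}. For $\si_j=0$, I would interpret $\Phi(\pm\infty)\in\{0,1\}$ and check that both $G$ and $I_1$ extend continuously; the text after \eqref{eq:I1bc} already notes that $I_1$ does. For $\si_j=1$, I would argue that $M_\joi$ converges in distribution as $\si_j\uparrow1$, by the continuous mapping theorem. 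The second moments of $M_\joi$ are bounded, since $\E[M_\joi^2]\le\E[M_{\si_1}^2]+\E[M_{\si_2}^2]\le2$ by Proposition~\ref{prop:M2}, so $\{M_\joi\}$ is uniformly integrable. This gives convergence of $\E[M_\joi]$ to its value at $\si_j=1$. The formula is then understood as the continuous extension, mirroring the argument at the end of Theorem~\ref{thm:mean-max}.
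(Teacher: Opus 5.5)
Your proposal is correct and follows the paper's argument: the isotropy reduction to $2\pi\,\E[M_\joi]$, the four-term expansion of the product of the distribution functions from Proposition~\ref{prop:cdf}, integration by parts for the $I_1$ term, and Owen's formula \eqref{eq:Gabcsol} for the three $G$-terms. The only difference is at $\si_j=1$. You pass to the limit via uniform integrability of $M_\joi$, as at the end of Theorem~\ref{thm:mean-max}, whereas Remark~\ref{re:Gabc} notes that the $a=0$ integrals converge directly (since $b<0$ or $c<0$) and identifies them with the $a\downarrow 0$ limit of \eqref{eq:Gabcsol} by dominated convergence; both arguments are valid.
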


\begin{remark}\label{re:Gabc}
	The edge cases \( \sigma_j \in \vit{0,1} \) seemingly introduce singularities.
	As noted earlier, \( I(\sigma_1, \sigma_2) \) is never problematic.
	When \( b=\infty \) or \( c=\infty \) in~\eqref{eq:Gabc} (corresponding to \( \sigma_j = 0 \)), the integral reduces
	to~\eqref{eq:Jab}. In fact,~\eqref{eq:Jab} is a special case of~\eqref{eq:Gabcsol} as can be seen directly
	through the formulas or through a monotone convergence argument. If \( a=0 \) (corresponding to \( \sigma_j = 1 \)),
	the formula~\eqref{eq:Gabcsol} does not make sense at first. However, it is easy to see that the integral
	in~\eqref{eq:Gabc} will converge as long as \( b < 0 \) or \( c < 0 \), as will always be the case in our
	application. Then we can use a dominated convergence argument to show that its value is indeed the limit of~\eqref{eq:Gabcsol}
	as \(a \downarrow 0 \).
	It is likely easier to evaluate~\eqref{eq:Gabc} with \( a=0 \) with the same integration by parts used
	to evaluate \( I_1(b,c) \). Indeed, assuming \( b < 0 \) and using~\eqref{eq:Kab},
	\begin{equation}\label{eq:Gabckadaje0}
		\begin{aligned}
			\int_0^\infty \Phi(bx) \Phi(cx) \D x & =
			- \int_0^\infty bx \varphi(bx) \Phi(cx) \D x - \int_0^\infty cx\Phi(bx)\varphi(cx) \D x          \\
			                                     & = \frac{-1}{bc\sqrt{8\pi}} \ob{b + c + \sqrt{b^2 + c^2}}.
		\end{aligned}
	\end{equation}
\end{remark}

\begin{remark}
	The formula in~\eqref{eq:Eperim2proc} can be used to reconstruct more known results. Suppose
	\( \sigma_1 = 0 \) and \( \sigma_2 = 1 \), i.e.\ we are looking at an independent Brownian bridge and Brownian motion.
	We have \( I_1(1/\si_1, 1/\si_2) = 1/\sqrt{2\pi} \) and \( G(0, \infty, -1) = 1/\sqrt{2\pi} \) as
	seen in the proof of~\eqref{eq:I1}. The rest is found via~\eqref{eq:Jab}, i.e.
	\begin{equation}
		G(2,1,\infty) = \frac{\sqrt \pi}{4\sqrt 2} + \frac 1{2 \sqrt{2\pi}} \arctan \frac 12, \qquad
		G(2, \infty, -1) = \frac{\sqrt \pi}{4\sqrt 2} - \frac 1{2 \sqrt{2\pi}} \arctan \frac 12.
	\end{equation}
	Summing and multiplying by \( 2\pi \) we get the expected perimeter
	\begin{equation}
		\E [\per K_{0,1}] = \sqrt{2\pi} \ob{2 + \arctan \frac 12},
	\end{equation}
	as already seen in \cite[Theorem 1.1]{sebek}.
	Also, we can consider two independent Brownian motions (\( \si_1=\si_2=1 \))
	which, using~\eqref{eq:Gabckadaje0}, yields the result
	\begin{equation}
		\E [\per K_{1,1}] = 4\sqrt\pi,
	\end{equation}
	as seen in~\cite[p.\ 991]{majumdar}. For two independent Brownian bridges (\( \si_1=\si_2=0 \))
	the calculations are the simplest as they involve \( b=c=\infty \) in~\eqref{eq:Gabc}
	and we arrive at
	\begin{equation}
		\E [\per K_{0,0}] = \pi^{3/2} \ob{\sqrt 2 - \frac 12},
	\end{equation}
	as seen in~\cite[p.\ 994]{majumdar}.
\end{remark}

\subsection{Expected area (two processes)}\label{ssec:area2}
In this section, we consider the expected area of the joint convex hull spanned by two independent
processes with parameters \( \si_1, \si_2 \in \cc{0,1} \). We start with the
expectation \( \E [M_\joi'(0)^2] \), continuing on the discussion in the introduction of
the section. First we evaluate the density \( f_{(\tau_{\si_j}, M_{\si_j})} \).
Through completing the square on the right side of~\eqref{eq:a23}, we get an expression of the form
\begin{equation}\label{eq:denstausiMsi}
	f_{(\tau_{\si_j},M_{\si_j})}(t, m) = C_{0,j}\ob{ m^2 e^{-C_{1, j} m^2} \Phi(C_{2,j} m) + \frac1{\sqrt{2\pi} C_{2,j}} m e^{-C_{3,j} m^2}},
\end{equation}
for all $m \ge 0$ and $t \in \cc{0,1}$ where
\begin{equation}
	\begin{aligned}
		C_{0,j} & = \sqrt{\frac2\pi} \cdot \frac{1-\sigma_j^2}{t^{3/2} (t\sigma_j^2 + 1 - t)^{3/2}}, &
		C_{1,j} & = \frac{1}{2t(t\sigma_j^2 + 1 - t)},                                                 \\
		C_{2,j} & = \frac{1-\sigma_j^2}{\sigma_j} \sqrt{\frac{1-t}{t\sigma_j^2 + 1 - t}},            &
		C_{3,j} & = \frac{1}{2} \ob{ \frac{1}{t} + \frac{1}{\sigma_j^2} - 1 }
	\end{aligned}
\end{equation}
are functions in $t$ for a fixed \( j \in \vit{1,2} \).
Before moving on to
evaluating~\eqref{eq:a22}, consider the integral type
\begin{equation}
	H(a,b,c) = \int_0^\infty x^2 e^{-ax^2} \Phi(bx) \Phi(cx) \D x.
\end{equation}
Using integration by parts (\( \up d v = x e^{-ax^2} \up d x \)) and
denoting~\eqref{eq:Kab} as \( K(a,b) \), we obtain
\begin{equation}\label{eq:Habc}
	\begin{aligned}
		H(a,b,c) = \frac1{2a} \ug{ G(a,b,c) + \frac b{\sqrt{2\pi}} K\ob{ a + \frac{b^2}2,c}
			+ \frac c{\sqrt{2\pi}} K\ob{a + \frac {c^2}2,b}}.
	\end{aligned}
\end{equation}
Fix  $j \in \vit{1,2}$ and let $k$ be the other index.
Using~\eqref{eq:denstausiMsi} and~\eqref{eq:cdf} we can
evaluate~\eqref{eq:a22} as
\begin{equation}
	\begin{aligned}
		f_{(J, \tau_\joi)}(j, t ) = & C_{0,j} [ H(C_{1,j}, C_{2,j}, C_{4,k})  -
		H(C_{1,j} + C_{5,k}, C_{2,j}, C_{6,k} )                                                                                                          \\
		                            & + \frac1{C_{2,j}\sqrt{2\pi}} K(C_{3,j},C_{4,k})          -\frac1{C_{2,j}\sqrt{2\pi}} K(C_{3,j} + C_{5,k}, C_{6,k})
		],
	\end{aligned}
\end{equation}
where the constants
\begin{equation}
	C_{4, k} = \frac1{\si_k}, \qquad
	C_{5,k} = 2(1-\si_k^2), \qquad
	C_{6,k} = \frac{1-2\si_k^2}{\si_k}
\end{equation}
correspond to those in~\eqref{eq:cdf}.
Integrating the result in~\eqref{eq:a21} leads to an expression
which appears not to have a closed form, as it is much more complicated than that in
Lemma~\ref{lem:a2} and the substitution would not leave us with a rational function.
This is the case even when $\si_1 \in \vit{0,1}$ or $\si_2\in\vit{0,1}$. When
both $\si_1, \si_2 \in \vit{0,1}$, the results are closed formulas known
from~\cite{majumdar} and~\cite{sebek}.

We move on to calculating \( \E [M_\joi^2] \).
We use the tail-integral formula as in Subsection~\ref{ssec:EM2}. The calculation is
a variant of that in the previous Subsection~\ref{ssec:perim2}, the difference being
the factor \( 2x \) in the integrand.
The first integral which will show up, corresponding to~\eqref{eq:I1bc}, is of the form
\begin{equation}\label{eq:mjoi21}
	\begin{aligned}
		\int_0^\infty x(1-\Phi(bx)\Phi(cx)) \D x =
		\frac 1{\sqrt{8\pi}} \ug{bH(b^2/2, c, \infty) + cH(c^2/2, b, \infty)},
	\end{aligned}
\end{equation}
where we used the integration by parts
involving \( \up dv = x \up dx \).
The expression \( H(a, b, \infty) \) can, starting from~\eqref{eq:Habc}, be further simplified as
\begin{equation}
	H(a, b, \infty) = \frac1{2a}\ug{J(a,b) + \frac b{\sqrt{2\pi}(2a + b^2)} },
\end{equation}
where \( J(a,b)=G(a,b,\infty) \) denotes~\eqref{eq:Jab}
and we use \( K(a,\infty) = 1/2a \) and \( c^2K(c+a,b) \to 0 \) as \( c \ub \).
Using~\eqref{eq:Jab}, eq.~\eqref{eq:mjoi21} can then also be written as
\begin{equation}\label{eq:mjoi21simp}
	\begin{aligned}
		\int_0^\infty x(1-\Phi(bx)\Phi(cx)) \D x
		 & = \frac1{4\pi}\ug{ \frac1{b^2}\ob{\frac\pi2 + \arctan \frac bc}
		+ \frac 1{c^2} \ob{\frac\pi2 + \arctan \frac cb} + \frac 1{bc}}    \\
		 & \eqcolon I_2(b,c).
	\end{aligned}
\end{equation}
The final integral type we will encounter, corresponding to~\eqref{eq:Gabc} in
Subsection~\ref{ssec:perim2}, is
\begin{equation}\label{eq:Labc}
	L(a,b,c) = \int_0^\infty xe^{-ax^2}\Phi(bx)\Phi(cx) \D x.
\end{equation}
As discussed previously we use integration by parts
with \( \up dv = xe^{-ax^2} \up dx \) to obtain
\begin{equation}\label{eq:Labcsol}
	\begin{aligned}
		L(a,b,c) & = \frac1{8a} + \frac b{a \sqrt{8\pi}} J\ob{a + \frac{b^2}2 , c}
		+ \frac c{a\sqrt{8\pi}} J\ob{a + \frac{c^2}2,b}                                        \\
		         & = \frac{1}{8a} \ob{ 1 + \frac{b}{\sqrt{2a+b^2}} + \frac{c}{\sqrt{2a+c^2}} } \\&\quad+ \frac{1}{4\pi a} \ob{ \frac{b}{\sqrt{2a+b^2}}\arctan\frac{c}{\sqrt{2a+b^2}}+ \frac{c}{\sqrt{2a+c^2}}\arctan \frac{b}{\sqrt{2a+c^2}}}.
	\end{aligned}
\end{equation}

We can now use~\eqref{eq:mjoi21simp} and~\eqref{eq:Labcsol}
to express
\begin{equation}
	\begin{aligned}
		\E [M_\joi^2] = 2\Bigg[ & I_2\ob{\frac1{\si_1},\frac1{\si_2}}  + L\ob{2(1-\si_2^2), \frac 1{\si_1}, \frac{1-2\si_2^2}{\si_2}  }                                   + L\ob{ 2(1-\si_1^2), \frac 1{\si_2}, \frac{1-2\si_1^2}{\si_1} } \\
		                        & - L\ob{ 2(2 - \si_1^2 - \si_2^2), \frac{1 - 2\si_1^2}{\si_1}, \frac{1 - 2\si_2^2}{\si_2} }\Bigg].
	\end{aligned}
\end{equation}

\subsection{A remark on the asymptotics}\label{ssec:davydov}
Consider i.i.d.\ Gaussian-endpoint Brownian bridges \( X_1, X_2, \ldots, X_n \) with uniform parameter \( \sigma \in \cc{0,1} \).
We consider the asymptotics of their joint convex hull \( K_{\si, n} \), as well as its perimeter and area,
as the number of processes \( n \) tends to infinity. The key result of~\cite{davydov} establishes the convergence
\begin{equation}
	\frac 1{\sqrt{2 \log n}} K_n \to K
\end{equation}
in the Hausdorff sense on the metric space of
compact convex sets in \( \R^2 \). Here \( K_n \) denotes the
joint convex hull of processes \( Y_1, Y_2, \ldots, Y_n \),
which are i.i.d.\ copies of a general centered a.s.\ bounded Gaussian process.
This is a very broad class of processes that includes our Gaussian-endpoint Brownian bridges.
Meanwhile,
\( K = \hull\vit{E_t \st t \in \cc{0,1}} \) is the limiting shape,
\( E_t \) denoting the concentration ellipsoid of \( Y_n(t) \).

Fix  \( \si \in \cc{0,1} \).
Due to isotropy, each concentration ellipsoid is a disk centered at the origin, as is the limiting object
$
	K_\sigma = r_\si B(0,1)
$,
where \( B(0,1) \) is the unit disk in \( \R^2 \) and
\begin{equation}
	r_\si = \max_{0 \le t \le 1} t - (1-\sigma^2) t^2.
\end{equation}
By an elementary computation it follows that
\begin{equation}\label{eq:r-sigma-final}
	r_\sigma
	=
	\begin{cases}
		\dfrac{1}{2\sqrt{1-\sigma^2}},
		 & 0\le \sigma\le \dfrac1{\sqrt2},
		\\ 
		\sigma,
		 & \dfrac1{\sqrt2}\le \sigma\le1.
	\end{cases}
\end{equation}
Additionally,~\cite[Theorem 2]{davydov} establishes the convergence
of the expectations of functionals towards the said functional applied to the limiting object.
More precisely, the functionals have to be nonnegative, continuous, increasing and \( p \)-homogeneous. 
Both the perimeter and area are examples of such functionals (\( p=1,2 \) respectively).
To summarize this subsection,
\begin{equation}
	\frac 1{\sqrt{2 \log n}} K_{\sigma, n} \to r_\si B(0,1), \qquad
	\frac 1{\sqrt{2 \log n}}\E [\per K_{\si, n}] \to 2r_\si\pi, \qquad
	\frac 1{2 \log n}\E [\area K_{\si ,n}] \to r_\si^2 \pi,
\end{equation}
as \( n \ub \) and the first convergence being in the Hausdorff sense.

\appendix
\section{}
In this section, we prove two technical auxiliary results used in the paper.

\begin{lemma}\label{lem:a2}
	For every $\sigma \in [0,1]$, one has
	\begin{equation}\label{eq:a2}
		\int_0^1 t(1-t)\arctan\ob{\sigma\sqrt{\frac{1-t}{t}}}\D t
		=
		\frac{\pi\sigma(4\sigma^2+9\sigma+3)}{48(1+\sigma)^3}.
	\end{equation}
\end{lemma}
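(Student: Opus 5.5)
The plan is to remove the square root with a substitution that turns the integral into a rational-times-arctangent integral on $(0,\infty)$. Then one integration by parts leaves a purely rational integral, which can be evaluated in closed form. The case $\sigma=0$ is trivial, since both sides vanish, so assume $0<\sigma\le 1$.

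\emph{Step 1 (substitution).} Put $s=\sqrt{(1-t)/t}$, i.e.\ $t=1/(1+s^2)$. Then $t(1-t)=s^2/(1+s^2)^2$ and $\D t=-2s(1+s^2)^{-2}\D s$, so the left-hand side of~\eqref{eq:a2} becomes
\begin{equation}
	\int_0^\infty \frac{2s^3}{(1+s^2)^4}\arctan(\sigma s)\D s.
\end{equation}

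\emph{Step 2 (integration by parts).} An antiderivative of $2s^3(1+s^2)^{-4}$ is obtained by writing $w=s^2$ and $w/(1+w)^4=(1+w)^{-3}-(1+w)^{-4}$. This gives
\begin{equation}
	F(s)=-\frac{1}{2(1+s^2)^2}+\frac{1}{3(1+s^2)^3}.
\end{equation}
This $F$ vanishes at infinity, and $\arctan(\sigma s)$ vanishes at $s=0$, so both boundary terms drop out. Hence the integral equals
\begin{equation}
	\sigma\int_0^\infty \Big(\frac{1}{2(1+s^2)^2}-\frac{1}{3(1+s^2)^3}\Big)\frac{\D s}{1+\sigma^2s^2}.
\end{equation}

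\emph{Step 3 (rational integrals).} Rather than doing full partial fractions, I would use the parametric integral
\begin{equation}
	R(a)=\int_0^\infty\frac{\D s}{(a+s^2)(1+\sigma^2 s^2)}=\frac{\pi}{2\sqrt a\,(1+\sigma\sqrt a)},\qquad a>0.
\end{equation}
This is checked by partial fractions in $s^2$ and the elementary integral $\int_0^\infty \D s/(c+s^2)=\pi/(2\sqrt c)$. Differentiating under the integral sign in $a$ gives
\begin{equation}
	\int_0^\infty\frac{\D s}{(1+s^2)^2(1+\sigma^2s^2)}=-R'(1),\qquad
	\int_0^\infty\frac{\D s}{(1+s^2)^3(1+\sigma^2s^2)}=\tfrac12R''(1).
\end{equation}
Writing $R(a)=\tfrac{\pi}{2}g(\sqrt a)$ with $g(u)=1/(u+\sigma u^2)$ makes these derivatives routine. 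Collecting everything, the integral equals $\sigma\big(-\tfrac12R'(1)-\tfrac16R''(1)\big)$, and both derivatives are rational in $\sigma$ with denominators that are powers of $(1+\sigma)$. Simplifying should give $\pi\sigma(4\sigma^2+9\sigma+3)/\big(48(1+\sigma)^3\big)$.

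The main obstacle is only the bookkeeping in Step 3, namely computing $R''(1)$ correctly and putting everything over the common denominator $(1+\sigma)^3$. As sanity checks I would use two limits. As $\sigma\to0$, the value divided by $\sigma$ should tend to $\int_0^\infty\big(\tfrac{1}{2(1+s^2)^2}-\tfrac{1}{3(1+s^2)^3}\big)\D s=\tfrac{\pi}{8}-\tfrac{\pi}{16}=\tfrac{\pi}{16}$, which matches $3\pi/48$. At $\sigma=1$, the formula should give $16\pi/384=\pi/24$.
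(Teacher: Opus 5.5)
Your proof is correct, and the bookkeeping you left as ``should give'' does close. Indeed, $g'(1)=-(1+2\sigma)/(1+\sigma)^2$ and $g''(1)-g'(1)=(8\sigma^2+9\sigma+3)/(1+\sigma)^3$. Hence $-R'(1)=\pi(1+2\sigma)/(4(1+\sigma)^2)$ and $\tfrac12R''(1)=\pi(8\sigma^2+9\sigma+3)/(16(1+\sigma)^3)$, and $\sigma\bigl(-\tfrac12R'(1)-\tfrac16R''(1)\bigr)$ reduces exactly to $\pi\sigma(4\sigma^2+9\sigma+3)/(48(1+\sigma)^3)$.

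After the common substitution, your route differs from the paper's. The paper differentiates in $\sigma$, obtaining $I'(\sigma)=2(J_2-2J_3+J_4)$ with $J_n=\int_0^\infty(1+x^2)^{-n}(1+\sigma^2x^2)^{-1}\,\mathrm{d}x$. It computes the $J_n$ by the recursion $J_n=(L_n-\sigma^2J_{n-1})/(1-\sigma^2)$ with Wallis integrals $L_n$, appealing to continuity at $\sigma=1$ where the recursion degenerates. It then has to integrate $I'$ back from $0$ to $\sigma$ by partial fractions.

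Your integration by parts in $s$ uses the fact that $2s^3(1+s^2)^{-4}$ has an elementary antiderivative vanishing at infinity. This gives the answer directly as $\sigma\bigl(\tfrac12J_2-\tfrac13J_3\bigr)$: only two of the paper's integrals, of lower order, and no final integration in $\sigma$. Your $-R'(1)$ and $\tfrac12R''(1)$ are precisely the paper's $J_2$ and $J_3$, obtained by differentiating one explicit parametric integral rather than by recursion. The paper's recursion is more systematic if higher $n$ were needed, but for this lemma your argument is shorter and has fewer places to slip.

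The technical points in your route are harmless. Differentiating under the integral in $a$ near $a=1$ is fine, and so is the removable point $\sigma^2a=1$ in the partial fractions for $R$, since both sides of your formula for $R$ are smooth in $a$ on $(0,\infty)$.
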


\begin{proof}
	Denote the integral in the statement of the lemma by $I(\sigma)$. Using the substitution \( x = \sqrt{(1-t)/t} \) we conclude that
	\begin{align}
		I(\sigma) & =
		\int_0^1 t(1-t)\arctan\ob{\sigma\sqrt{\frac{1-t}{t}}}\D t \\
		          & = 2\int_0^\infty
		\frac{x^3}{(1+x^2)^4}\arctan(\sigma x)\D x.
	\end{align}
	Differentiating with respect to $\sigma$ gives
	\begin{equation}
		I'(\sigma)=
		2\int_0^\infty
		\frac{x^4}{(1+x^2)^4(1+\sigma^2x^2)}\D x.
	\end{equation}
	To continue, rewrite the numerator as
	\begin{equation}
		x^4 = \ob{(1+x^2) - 1}^2 = (1+x^2)^2 - 2(1+x^2) + 1,
	\end{equation}
	so that
	\begin{equation}\label{eq:Ideri-split}
		I'(\sigma) = 2(J_2 - 2J_3 + J_4),
	\end{equation}
	where
	\begin{equation}
		J_n = \int_0^\infty \frac{\D x}{(1+x^2)^n(1+\sigma^2x^2)}.
	\end{equation}
	To find $J_n$, we establish a recursive formula. Starting with $n=1$,
	\begin{equation}
		\frac{1}{(1+x^2)(1+\sigma^2x^2)} = \frac{1}{1-\sigma^2} \ob{ \frac{1}{1+x^2} - \frac{\sigma^2}{1+\sigma^2x^2} },
	\end{equation}
	we divide by $(1+x^2)^{n-1}$ and integrate to obtain
	\begin{equation}\label{eq:Knrek}
		J_n = \frac{1}{1-\sigma^2} (L_n - \sigma^2 J_{n-1}), \qquad n \ge 2,
	\end{equation}
	where, substituting $x = \tan y$,
	\begin{align}
		L_n & = \int_0^\infty \frac{\D x}{(1+x^2)^n}           = \int_0^{\pi/2} \cos^{2n-2}y \D y           = 2^{2n-3} \up B \ob{\frac{2n-1}2,\frac{2n-1}2}
	\end{align}
	is known as Wallis' integral, and the final equality follows from \cite[3.621.1]{gradshteyn_ryzhik}.

	The base for the recursion,
	\begin{equation}
		J_1 = \frac \pi {2 (1 + \si)},
	\end{equation}
	is derived directly.
	Iteratively applying~\eqref{eq:Knrek} yields:
	\begin{align}
		J_2 =
		\frac{\pi(1+2\sigma)}{4(1+\sigma)^2},\quad
		J_3 =
		\frac{\pi(3+9\sigma+8\sigma^2)}{16(1+\sigma)^3}, \quad
		J_4 =
		\frac{\pi(5+20\sigma+29\sigma^2+16\sigma^3)}{32(1+\sigma)^4}.
	\end{align}
	Returning to~\eqref{eq:Ideri-split} and simplifying,
	\begin{equation}
		I'(\sigma)
		=
		\frac{\pi(\sigma^2+4\sigma+1)}{16(1+\sigma)^4}.
	\end{equation}
	By continuity, this formula also holds at $\sigma=1$. Since $I(0)=0$,
	\begin{equation}
		I(\sigma)
		=
		\frac{\pi}{16}
		\int_0^\sigma
		\frac{u^2+4u+1}{(1+u)^4}\D u.
	\end{equation}
	The derivation of~\eqref{eq:a2} is finished with the standard technique of partial fraction
	decomposition.
\end{proof}

\begin{lemma}\label{lma:uniqueargmax}
	For all \( \si \in \cc{0,1} \), the projected process \( \gproc x \si t = B_t \nderi 1 + \si t Y \) (see Subsection~\ref{ssec:projproc})
	a.s.\ attains its maximum at a unique point.
\end{lemma}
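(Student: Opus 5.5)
We prove the lemma by conditioning on the random endpoint and reducing to a uniqueness statement for Brownian motion. Note the harmless notational slip in the statement: the drift term is \( \si t N \) with \( N\sim\up N(0,1) \) independent of \( B\nderi 1 \), so the endpoint is \( Y=\si N \).

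Fix \( \si \in \cc{0,1} \). Conditionally on \( Y=y \), the process \( \gprocs x \si \) is a one-dimensional \( 0 \)-to-\( y \) Brownian bridge \( B^{0\to y} \). So it suffices to show that, for every fixed \( y\in\R \), \( B^{0\to y} \) a.s. attains its maximum over \( \cc{0,1} \) at a unique point. Integrating this conditional statement against the law of \( Y \) (a point mass at \( 0 \) when \( \si=0 \)) then gives the lemma. To make the conditioning rigorous, realize the bridge as
\[
B^{0\to y}_t = B\nderi 1_t + ty.
\]
The event of non-uniqueness is measurable in \( (B\nderi 1, N) \), and by independence and Fubini its probability equals the integral over \( y \) of the probability of the corresponding event for \( B\nderi 1_t+ty \).

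For fixed \( y \) we reduce to Brownian motion via absolute continuity on subintervals. Take rationals \( 0\le p<q<r<s\le 1 \) with \( q<r \). It suffices to show that
\[
\max_{\cc{p,q}} B^{0\to y} \ne \max_{\cc{r,s}} B^{0\to y} \quad \text{a.s.},
\]
because two distinct maximizers can always be separated by such disjoint rational intervals, and there are only countably many of them. Restricted to \( \cc{0,s} \) with \( s<1 \), the law of \( B^{0\to y} \) is equivalent to that of a standard Brownian motion \( W \). Indeed, its density with respect to Wiener measure is
\[
\frac{p_{1-s}(W_s,y)}{p_1(0,y)},
\]
which is strictly positive. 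For \( W \), the classical argument applies (Mörters--Peres, Thm.~2.11). By the Markov property at time \( r \), \( \max_{\cc{r,s}}W - W_r \) is independent of \( \mathcal F_r \) and has a continuous distribution, being distributed as \( \sqrt{s-r}\,|N| \). Meanwhile \( \max_{\cc{p,q}}W - W_r \) is \( \mathcal F_r \)-measurable. Hence the two maxima coincide with probability zero.

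The remaining issue, which I expect to be the main obstacle, is the endpoint \( s=1 \), where the density above degenerates. I would handle it by the time reversal already used before Proposition~\ref{prop:denstauy}: \( t\mapsto B^{0\to y}_{1-t}-y \) is a \( 0 \)-to-\( (-y) \) bridge. A pair of intervals with \( s=1 \) then becomes, after reversal, a pair with left endpoint \( 0 \) and right endpoint strictly less than \( 1 \), provided the original \( p>0 \). The case \( p=0 \) and \( s=1 \) simultaneously is excluded by refining the countable family. Any two distinct maximizers \( t_1<t_2 \) can be separated using intervals that avoid either \( 0 \) or \( 1 \). The only exception would be \( t_1=0 \) and \( t_2=1 \) simultaneously, which forces \( y=0 \) and \( M\nderi 0=0 \); this has probability zero by Lemma~\ref{lma:br0ymax}.
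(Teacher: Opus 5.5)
Your proof is correct, but it takes a different route from the paper's. The paper does not leave the bridge. After conditioning on \(Y=y\), it fixes a single rational \(q\in(0,1)\) and uses that, given \(B^{0\to y}_q\), the pieces on \([0,q]\) and \([q,1]\) are conditionally independent Brownian bridges with general endpoints. Their maxima have atomless laws (Lemma~\ref{lma:br0ymax} after translation and scaling), so the event \(\max_{[0,q]}=\max_{[q,1]}\) is null, and a countable union over \(q\) finishes the proof. Because both pieces are genuine bridges, the endpoints \(0\) and \(1\) need no special treatment.

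You instead transfer the problem to Brownian motion via the \(h\)-transform density \(p_{1-s}(W_s,y)/p_1(0,y)\) on \(\mathcal F_s\), \(s<1\), and then run the M\"{o}rters--Peres argument with separated rational intervals and the Markov property at time \(r\). This needs only the reflection principle for \(W\), not the law of bridge maxima with arbitrary endpoints on subintervals. It is also a generic device that carries any almost-sure path property of \(W\) on \([0,s]\) over to the bridge. The cost is that the density degenerates at \(s=1\). You therefore need the time-reversal step (your \(t\mapsto B^{0\to y}_{1-t}-y\) correctly preserves maxima up to the constant \(-y\)) and the separate corner case of maximizers at both \(0\) and \(1\). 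You correctly reduce that case to \(y=0\) with \(M^{(0)}=0\), which is null by Lemma~\ref{lma:br0ymax}. The paper's version is shorter given its Lemma~\ref{lma:br0ymax}; yours is more self-contained on the Brownian motion side.

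Your remark that the drift in the statement should read \(\si t N\) rather than \(\si t Y\) is also correct and harmless.
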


\begin{proof}
	As noted in Subsection~\ref{ssec:projproc}, conditionally on \( Y=y \) the process \( \gprocs x \si \)
	is a one-dimensional $0$-to-$y$ Brownian bridge on \( \cc{0,1} \). Fix  \( y\in \R \)
	and consider the conditional process denoted \( B_t^{0 \to y} \). We now use a standard technique---suppose
	\( B_t^{0 \to y} \) attains its maximum at two distinct times. Then for some rational \( q \in \oo{0,1} \),
	\begin{equation}\label{eq:equalmax}
		\max_{0 \le t \le q} B_t^{0 \to y} = \max_{q \le t \le1} B_t^{0 \to y}.
	\end{equation}
	The two pieces of the bridge on \( \cc{0,q} \) and \( \cc{q,1} \) are independent conditionally on the value \( B_q^{0 \to y} \).
	Moreover these are independent Brownian bridges with general endpoints. Their maxima are also conditionally independent
	and conditionally absolutely continuous (cf.\ Lemma~\ref{lma:br0ymax}). The probability of the event in~\eqref{eq:equalmax}
	is therefore \( 0 \). Taking a countable union over rational \( q \), we prove that \( B_t^{0 \to y} \) a.s.\ has
	a unique maximizing time. Integrating over \( Y=y \), the same follows for the process \( \gprocs  x \si \).
\end{proof}

\section*{Disclosure on the use of LLM technologies}
\noindent The authors disclose that they used ChatGPT and Gemini for aid in overviews of literature and known results, symbolic calculations and proofreading. The authors have verified all claims and calculations and have composed the manuscript independently. They take full responsibility for the correctness and accuracy of the paper's contents.

\section*{Acknowledgments}
\noindent NS and S\v{S} were supported by  the European Union – NextGenerationEU through the National Recovery and Resilience Plan 2021-2026 Institutional grant of University of Zagreb Faculty of Science (IK IA 1.1.3. Impact4Math), and Institutional grant of University of Zagreb Faculty of Electrical Engineering and Computing (VALOR). NS, S\v{S} and L\v{S} received support from grant DIGIT.2.1.02.016 funded by the Digital, Innovation, and Green Technology Project – DIGIT Project (IBRD Loan No. 9558‑HR).
NS, S\v{S} and L\v{S} were supported by  Croatian Science Foundation grant no.~2277.

\appendix

\printbibliography

\end{document}